\def\figurename{Figure} 
\renewcommand{\fnum@figure}[1]{\figurename~\thefigure.}
\def\tablename{Table} 
\renewcommand{\fnum@table}[1]{\tablename~\thetable.}
\newtheorem{theorem}{Theorem}[section]
\newtheorem{lemma}[theorem]{Lemma}
\newtheorem{proposition}[theorem]{Proposition}
\theoremstyle{definition}
\theoremstyle{remark}
\newtheorem{remark}[theorem]{Remark}
\numberwithin{equation}{section}
\def\P{\mathbb P}
\def\R{\mathbb R}
\def\E{\mathbb E}
\def\D{\mathbb D}
\def\E{\mathbb E}
\def\N{\mathbb N}
\begin{document}

\title{\bfseries\scshape{A Numerical scheme for backward doubly stochastic differential equations}
\thanks{This work was supported by AUF post doctoral grant 07-08, Réf:PC-420/2460  and partially performed when the author visit Université Cadi Ayyad of Marrakech (Maroc).}}
\author{\bfseries\scshape Auguste Aman\thanks{augusteaman5@yahoo.fr; auguste.aman@univ-cocody.ci}\\
UFR Mathématiques et Informatique\\ Universit\'{e} de Cocody, \\BP 582 Abidjan 22, C\^{o}te d'Ivoire}

\date{}
\maketitle \thispagestyle{empty} \setcounter{page}{1}

\begin{abstract}
In this paper we propose a numerical scheme for the class of backward doubly stochastic (BDSDEs) with possible path-dependent terminal values. We prove that our scheme converge in the strong $L^2$-sense and derive its rate of convergence. As an intermediate step we derive an $L^2$-type regularity of the solution to such BDSDEs. Such a notion of regularity which can be though of as the modulus of continuity of the paths in an $L^2$-sense, is new.
\end{abstract}

\noindent {\bf AMS Subject Classification}: 65C05; 60H07; 62G08

\vspace{.08in} \noindent \textbf{Keywords}: Backward doubly SDEs; $L^{\infty}$-Lipschitz functionals; numerical scheme; $L^{2}$-regularity, regression estimation.

\section{Introduction}
\setcounter{theorem}{0} \setcounter{equation}{0}
In this paper we are interested in the following backward
doubly stochastic differential equations (BDSDEs, in short):
\begin{eqnarray}
Y_{t}&=&\xi+\int_{t}^{T}f(s,X_{s},Y_{s},Z_{s})\,
ds+\int_{t}^{T}g(s,X_{s},Y_{s})\overleftarrow{dB_{s}} -\int_{t}^{T}Z_{s}dW_{s},\label{FBSDE}
\end{eqnarray}
where $W$ and $B$ is two independent Brownian motion defined on $(\Omega,\mathcal{F},\P)$ a complete probability
space. This kind of equation has two different
directions of stochastic integrals: standard (forward) stochastic integral driven by $W$ and
backward stochastic one driven by $B$. Initiated by Pardoux-Peng \cite{PP1}, BDSDEs is connected to quasi-linear stochastic
partial differential equations (SPDEs, in short) in order to derive Feynman-Kac formula for SPDEs. In this setting, BDSDEs have been extensively studied  in the past decade. We refer the readers to the papers of Buckdahn and Ma \cite{BM1,BM2}, Aman et al. \cite{Amanal}, Aman \cite{Aman}, Bahlali et al.\cite{Bahlalial}, and reference therein for more information on both theory and applications, especially in mathematical finance and stochastic control, for such equations.
In contrast, there was little progress made in the direction of the numerical implementation of BDSDEs. In special case of BDSDEs ($g\equiv 0$) called BSDEs, many efforts have been made in this direction as well.

Up to now basically two types of schemes have been considered. Based on the theo-\newline retical
four-step scheme from \cite{MPY}, first type of numerical algorithms for BSDEs have been developed by Douglas et al. \cite{Dal} and more recently by Milstein and Tretyakov \cite{MT}. The main focus of these algorithms is the numerical solution of parabolic PDEs which is related to BSDEs.

A second type of algorithms works backwards through time and tries to tackle the stochastic
problem directly. Bally \cite{Bal} and Chevance \cite{Chevance} were the first to study this type of algorithm
with a (hardly implementable) random time partition under strong regularity assumptions. The works of Ma et al. \cite{Mal} and Briand et al. \cite{Bral} ares in the same spirit, replacing, however, the Brownian motion by a binary random walk. Recently, Zhang proved, in \cite{Zhang}, a new notion of $L^2$-regularity on the control part  $Z$ of the solution which allowed proof of convergence of this backward approach with deterministic partitions under rather weak regularity assumptions (see \cite{Zhang},\,\cite{Tal} and \cite{Gal}) for different algorithms. All numerical schemes provide alternative to construct algorithm for PDEs. To the best of our knowledge to date there has been no discussion in the literature concerning numerical algorithms in the spirit of the last three works cited above in the general case i.e $g\neq 0$. This constitutes an insufficiency when we know that almost all the deterministic problems in these applied fields (PDEs) have their stochastic counterparts (SPDEs).

In this paper, to correct this empty, our goal is to build a numerical scheme following the idea used by Bouchard and Touzi \cite {Tal} and study its convergence. These results were important from a pure mathematical point of view as well as from application in the world of finance. Particulary, this numerical scheme opens the way for possible algorithm for determining the price of options on financial assets whose dynamics is solution of SPDEs.

Similarly to the special case $g\equiv 0$, the main difficulty lies in the approximation of the "martingale integrand" $Z$. In fact, in a sense the problem
often comes down to the path regularity of $Z$. However, in case $g\neq 0$, this regularity becomes a natural question to ask. Therefore, the first main result in this paper is to derive the path regularity called $L^{2}$-regularity for BDSDEs with the terminal value $\xi$ is the form $\Phi(X)$, where $X$ and $\Phi(.)$ are respectively diffusion process and $L^{\infty}$-Lipschitz functional (see Section 3 for precise definition). The proof is heavily related to Girsanov's Transformation which exists in the BDSDEs case only if $g$ do not depend to $z$.

The above $L^{2}$-regularity result allow us to provide the rate of convergence of our numerical scheme which is different from the one constructed in \cite{Tal}. Indeed, since BDSDEs have two directions of integral, our  numerical scheme need at each step the conditional expectation with respect the filtration $\mathcal{F}_{t_i}^{\pi}$ defined by $\mathcal{F}^{\pi}_{t_i}=\sigma(X^{\pi}_{t_j},\; j\leq i)\vee\sigma(B_{t_j},\; j\leq i)$. However we obtain the same convergence rate.

The rest of this paper is organized as follows. In Section 2, we introduce some fundamental knowledge and assumptions of BDSDEs. Section 3 is devoted to $L^{2}$-regularity results. In Section 4, we built our numerical scheme and prove the rate of convergence. Finally in section 5, we focus some ideas for the regression approximation and give it convergence rate.

\section{Preliminaries}
\setcounter{theorem}{0} \setcounter{equation}{0}
Let $(\Omega,\mathcal{F},\mbox{I\hspace{-.15em}P})$ be a complete probability  spaces, and $T>0$ be fixed throughout this paper. Let $\{W_{t}, 0\leq t\leq T\}$ and $\{B_{t}, 0\leq t\leq T\}$ be two mutually independent standard Brownian motions processes, with values respectively in $\R^{d}$ and $\R^{\ell}$, defined on  $(\Omega,\mathcal{F},\mbox{I\hspace{-.15em}P})$. Let $\mathcal{N}$ denote the class of $\P$-null sets of $\mathcal{F}$. For each $t\in [0,T]$, we define
\begin{eqnarray*}
\widetilde{\mathcal{F}}_{t}=\mathcal{F}^{W}_{t}\vee\mathcal{F}_{t,T}^{B},
\end{eqnarray*}
where for any process $\displaystyle{\left(\eta_{s}: 0\leq
s\leq T\right)},\;\mathcal{F}
^{\eta}_{s,t}=\sigma\{\eta_{r}-\eta_{s}, s\leq r \leq t\}\vee\mathcal{N}, \,\,\mathcal{F}^{\eta}_{t}=\mathcal{F}^{\eta}_{0,t}$.

We note that since the collection $(\widetilde{\mathcal{F}}_{t})_{t\geq 0}$ is neither increasing nor decreasing,
it does not constitute a filtration. Therefore, we define the filtration $(\mathcal{F}_{t})_{t\geq 0}$ by
\begin{eqnarray*}
\mathcal{F}_{t}=\mathcal{F}^{W}_{t}\vee\mathcal{F}_{T}^{B},
\end{eqnarray*}
which contains $\tilde{\mathcal{F}}_{t}$ and play a key role in the build of our numerical scheme.

For any real $p\geq 2$ and $k\in \N^{*}$, let $\mathcal{S}^{p}(\R^{k})$ denote
the set of jointly measurable processes $
 \{X_{t}\}_{t\in \lbrack 0,T]}$ taking values in $\R^{k}$ which satisfy
\begin{description}
\item $(i)\;\displaystyle{
\|X\|_{\mathcal{S}^{p}}=\E\left( \sup\limits_{0\leq t\leq
T}|X_{t}|^{p}\right) ^{\frac{1}{p}}<+\infty}$;
\item $(ii)$\,$X_t$  is $\widetilde{\mathcal{F}}_t$-measurable, for any $t\in[0,T]$.
\end{description}
We denote similarly by $\mathcal{M}^{p}(\R^{k})$ the set of (classes of $d\P\otimes dt$ a.e.
equal) $k$-dimensional jointly measurable processes which satisfy
\begin{description}
\item $(i)\,\displaystyle{
\|X\|_{\mathcal{M}^{p}}=\E \left[ \left(
\int_{0}^{T}|X_{t}|^{2}dt\right) ^{\frac{p}{2}}\right] ^{\frac{1}{p}%
}<+\infty};$
\item $(ii)$\,$X_t$  is $\widetilde{\mathcal{F}}_t$-measurable,\, for a.e. $t\in[0, T]$.
\end{description}
We denote by
\begin{description}
\item $\bullet\, W^{1,\infty}(\R^{k})$ the space of all measurable functions $\psi:\R^k\rightarrow\R$, such that for some constant $K>0$ it holds that $\displaystyle{|\psi(x)-\psi(y)|\leq K|x-y|,\; \forall\, x,\,y \in \R^k}$;
\item $\bullet\, \D$ the space of all càdlàg functions defined on $[0,T ]$;
\item $\bullet\, C^m_b([0,T]\times\R^k)$ the space of all continuous functions (not necessary bounded) $\psi:[0,T]\times\R^k\rightarrow\R$, such
that $\psi$ has uniformly bounded derivatives with respect to the spatial variables up to order $m$. We often denote $C^m_b = C^m_b ([0,T]\times\R^k)$ for simplicity, when the context is clear.
\end{description}
Let
\begin{eqnarray*}
b&:&[0,T]\times\R^{d}\rightarrow\R^{d},\\
\sigma&:&[0,T]\times\R^{d}\rightarrow\R^{d\times d},\\
f&:&[0,T]\times\R^{d}\times\R\times\R^{d}\xrightarrow{}\R,\\
g&:&\, [0,T]\times\R^{d}\times\R\xrightarrow{}\R^{\ell }
\end{eqnarray*}
be the functions satisfying the following assumptions: there exists constant $K>0$ such that for all $s,s'\in [0,T],\, x,x'\in\R^d,\; y, y'\in\R,\; z,z'\in\R^d$,
\begin{description}
\item $({\bf H1})\, |b(s,x)-b(s,x')|+\|\sigma(s,x)-\sigma(s,x')\|\leq K|x-x'|$.
\item $({\bf H2})$\\ $
\begin{array}{l}
(i)\; |f(s,x,y,z)-f(s',x',y',z')|^{2}\leq K\left(|s-s'|^{2}+|x-x'|^{2}+|y-y'|^{2}+|z-z'|^{2}\right),\\\\
(ii)\; |g(s,x,y)-g(s,x',y')|^{2} \leq  K(|s-s'|^{2}+|x-x'|^2+|y-y'|^{2}).
\end{array}$
\item $({\bf H3})\;\; \sup_{0\leq t\leq T}\{|b(t,0)|+|\sigma(t,0)|+|f(t,0,0,0)|+|g(t,0,0)|\}\leq K.
$
\end{description}
Given $\xi\in L^2(\Omega,\tilde{\mathcal{F}}_T,\P;\R^d)$, denote $(X,Y,Z)$ be the solution to the following FBDSDE:
\begin{eqnarray}
X_{t}&=&x+\int_{0}^{t}b(s,X_{s})ds+\int_{0}^{t}\sigma(s,X_{s})dW_{s}\label{FSDE}\\
Y_{t}&=&\xi+\int_{t}^{T}f(s,X_{s},Y_{s},Z_{s})\,
ds+\int_{t}^{T}g(s,X_{s},Y_{s})\overleftarrow{dB_{s}} -\int_{t}^{T}Z_{s}dW_{s}.\label{FBSDE}
\end{eqnarray}

Let now  recall some standard results appear in SDEs and BDSDEs literature.
\begin{proposition}[Karatzas and Shreve \cite{KS}]
\label{1}
Assume $(\bf H1)$ holds. Then for any initial condition $x\in\R^{d}$, FSDE \eqref{FSDE}
has a unique solution $(X_{t})_{0\leq t\leq T}$ belong to $\mathcal{S}^{p}(\R^d)$.

Moreover, for any $p\geq 2$, there exists a constant $C_p>0$, depending only on $T, K$ and $p$, such that
\begin{eqnarray*}
\E\left(\sup_{0\leq t\leq T}|X_t|^p\right)\leq C_p\left(|x|^{p}+\int^{T}_{0}[|b(t,0)|^p +|\sigma(t,0)|^p]dt\right)
\end{eqnarray*}
and
\begin{eqnarray*}
\E\left[|X_t-X_s|^{p}\right]\leq C_p\E\left(|x|^{p}+\sup_{0\leq t\leq T}|b(t,0)|^p+\sup_{0\leq t\leq T}|\sigma(t,0)|^p\right)|t-s|^{p/2}.
\end{eqnarray*}
\end{proposition}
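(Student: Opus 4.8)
The plan is to establish the three assertions in turn by the standard fixed-point-plus-a-priori-estimate method, since under $(\bf H1)$ and $(\bf H3)$ the coefficients $b$ and $\sigma$ are globally Lipschitz in the spatial variable with linear growth.

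For existence and uniqueness I would run a Picard iteration: set $X^0_t\equiv x$ and define
$$X^{n+1}_t=x+\int_0^t b(s,X^n_s)\,ds+\int_0^t\sigma(s,X^n_s)\,dW_s.$$
Writing $\Delta^n_t=X^{n+1}_t-X^n_t$, I would bound $\E\sup_{r\le t}|\Delta^n_r|^2$ by splitting into the drift and diffusion parts, applying the Cauchy--Schwarz inequality to the former and the Burkholder--Davis--Gundy (BDG) inequality to the latter, and then using the Lipschitz bound $(\bf H1)$. This yields $\E\sup_{r\le t}|\Delta^n_r|^2\le C\int_0^t\E\sup_{u\le s}|\Delta^{n-1}_u|^2\,ds$, whence by iteration $\E\sup_{r\le T}|\Delta^n_r|^2\le (CT)^n M/n!$ for a finite constant $M$. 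Summability of this series shows $(X^n)$ is Cauchy in $\mathcal S^2(\R^d)$, and its limit $X$ solves \eqref{FSDE}; uniqueness follows from the same Gronwall argument applied to the difference of two solutions.

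For the moment estimate I would work directly on a solution $X$, first localizing by the stopping times $\tau_N=\inf\{t:|X_t|\ge N\}\wedge T$ to guarantee all quantities are finite. Taking $p$-th moments of the supremum over $[0,t\wedge\tau_N]$, applying BDG to the stochastic integral and H\"older's inequality in time to pass from the quadratic-variation exponent $p/2$ to $|\sigma|^p$ under a time integral, and invoking the linear growth $|\sigma(s,x)|\le|\sigma(s,0)|+K|x|$ (and similarly for $b$) that follows from $(\bf H1)$ and $(\bf H3)$, I obtain
$$\E\Big(\sup_{r\le t\wedge\tau_N}|X_r|^p\Big)\le C_p\Big(|x|^p+\int_0^T[|b(s,0)|^p+|\sigma(s,0)|^p]\,ds\Big)+C_p\int_0^t\E\Big(\sup_{u\le s\wedge\tau_N}|X_u|^p\Big)\,ds.$$
Gronwall's lemma then gives the stated bound uniformly in $N$, and Fatou's lemma (letting $N\to\infty$) removes the localization.

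The increment estimate is the easiest step: writing $X_t-X_s=\int_s^t b(r,X_r)\,dr+\int_s^t\sigma(r,X_r)\,dW_r$, I would again use BDG and H\"older on $[s,t]$, which produce the factors $|t-s|^{p}$ and $|t-s|^{p/2}$ respectively, then bound $|b(r,X_r)|^p$ and $|\sigma(r,X_r)|^p$ by linear growth and feed in the already-proven uniform moment bound on $\E\sup_r|X_r|^p$; since $p\ge 2$, the dominant power of $|t-s|$ on the bounded interval $[0,T]$ is $p/2$. The main obstacle is the moment estimate: one must be disciplined about a priori finiteness (hence the localization) before Gronwall can be applied, and must track carefully that H\"older in time converts the BDG exponent $p/2$ correctly, so that no spurious dependence on $\sup|X|$ with the wrong exponent is introduced into the Gronwall inequality.
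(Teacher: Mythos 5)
The paper does not actually prove this proposition: it is quoted verbatim from the literature (Karatzas and Shreve) and used as a black box, so there is no in-paper argument to compare yours against. Your proposal is the standard textbook proof and is correct as outlined: Picard iteration with the factorial bound for existence and uniqueness, localization plus Burkholder--Davis--Gundy, H\"older in time, and Gronwall for the $p$-th moment bound, and the decomposition of $X_t-X_s$ into drift and martingale parts for the increment estimate, with the exponent $p/2$ coming from the stochastic integral and $p\geq p/2$ making it the dominant power on a bounded interval. Two minor housekeeping points: the proposition as stated assumes only $(\bf H1)$, and indeed only $(\bf H1)$ is needed for your argument once you write the linear growth as $|b(s,x)|\leq|b(s,0)|+K|x|$ (the role of $(\bf H3)$ is merely to make the right-hand sides finite, so you should not present it as a standing hypothesis of the proof); and in the localization step you should note that $\tau_N\to T$ almost surely, which is what lets Fatou (or monotone convergence) remove the cutoff. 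Neither affects the validity of the argument.
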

\begin{proposition}[Pardoux and Peng \cite{PP1}]
\label{L2}
Under assumption $({\bf H2})$, BDSDE $(\ref{FBSDE})$ has a unique solution $(Y_t,Z_{t})_{0\leq t\leq T}$ in $\mathcal{S}^{p}(\R)\times\mathcal{M}^{p}(\R^d)$.

Moreover, for any $p\geq 2$, there exists a constant $C_p>0$, depending only on $T, K$ and $p$, such that
\begin{eqnarray*}
\E\left(\sup_{0\leq t\leq T}|Y_t|^p+\left(\int^{T}_{0}|Z_s|^{2}ds\right)^{p/2}\right)\leq C_{p}\E\left(|\xi|^p+\int^{T}_{0}[|f(t,0,0,0)|^p +|g(t,0,0)|^p]dt\right)
\end{eqnarray*}
and
\begin{eqnarray*}
\E\left[|Y_t-Y_s|^{p}\right]\leq C_{p}\E\left\{\left[|\xi|^{p}+\sup_{0\leq t\leq T}|f(t,0,0,0)|^p+\sup_{0\leq t\leq T}|g(t,0,0)|^p\right]|t-s|^{p-1}+\left(\int^{t}_{s}|Z_s|^{2}ds\right)^{p/2}\right\}.
\end{eqnarray*}
\end{proposition}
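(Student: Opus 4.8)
The plan is to establish the two a priori estimates by combining the doubly-stochastic It\^o formula with the Burkholder--Davis--Gundy (BDG) and Gronwall inequalities, after recalling the fixed-point construction that yields existence and uniqueness. For the latter I would freeze the unknown and define a map $\Theta$ on $\mathcal{S}^p(\R)\times\mathcal{M}^p(\R^d)$ sending $(U,V)$ to the pair $(Y,Z)$ associated with the data $f(s,X_s,U_s,V_s)$ and $g(s,X_s,U_s)$: setting $N_t=\E\big[\xi+\int_0^T f(s,X_s,U_s,V_s)\,ds+\int_0^T g(s,X_s,U_s)\overleftarrow{dB_s}\,\big|\,\mathcal{F}_t\big]$ and applying the martingale representation theorem relative to the genuine filtration $\mathcal{F}_t=\mathcal{F}^W_t\vee\mathcal{F}^B_T$ produces $Z$, after which $Y$ is recovered by subtracting the drift and the backward integral. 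Under $({\bf H2})$ one checks that $\Theta$ is a contraction for the weighted norm $\E\big[\int_0^T e^{\beta s}(|Y_s|^2+|Z_s|^2)\,ds\big]$ with $\beta$ large enough, so Banach's fixed-point theorem gives the unique solution; the passage from $p=2$ to general $p\geq 2$ follows from the integrability of the data and the $L^p$-estimate below.

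For the first estimate I would apply the It\^o formula valid for processes carrying both a forward ($dW$) and a backward ($\overleftarrow{dB}$) integral to $|Y_t|^2$. The essential feature is that the backward integral contributes a quadratic-variation term whose sign is opposite to that of the forward one, giving
\begin{eqnarray*}
|Y_t|^2+\int_t^T|Z_s|^2\,ds
&=&|\xi|^2+2\int_t^T Y_s f(s,X_s,Y_s,Z_s)\,ds+\int_t^T|g(s,X_s,Y_s)|^2\,ds\\
&&+\,2\int_t^T Y_s g(s,X_s,Y_s)\overleftarrow{dB_s}-2\int_t^T Y_s Z_s\,dW_s.
\end{eqnarray*}
Taking expectations kills the two stochastic integrals, and I would use $({\bf H2})$, $({\bf H3})$ together with Young's inequality $2ab\leq \varepsilon a^2+\varepsilon^{-1}b^2$ to absorb $\tfrac12\int_t^T|Z_s|^2\,ds$ into the left-hand side while leaving integrable remainders in $|X_s|$, $|Y_s|$ and the data; Gronwall's lemma then controls $\sup_t\E|Y_t|^2$ and $\E\int_0^T|Z_s|^2\,ds$. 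To reach the stated bound with the supremum inside the expectation I would instead keep the stochastic integrals, estimate them through BDG, and close the resulting inequality by Gronwall; the terms in $|X_s|$ are finally replaced using the right-hand side of Proposition~\ref{1}.

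For the path-regularity estimate I would subtract the defining equations at times $s<t$ to obtain
\begin{eqnarray*}
Y_t-Y_s=-\int_s^t f(r,X_r,Y_r,Z_r)\,dr-\int_s^t g(r,X_r,Y_r)\overleftarrow{dB_r}+\int_s^t Z_r\,dW_r,
\end{eqnarray*}
and bound the $p$-th moment of each term separately. Jensen's inequality produces a factor $|t-s|^{p-1}$ in front of the drift term, while BDG controls the two stochastic integrals; I would deliberately retain the $dW$-contribution as $\big(\int_s^t|Z_r|^2\,dr\big)^{p/2}$, since this is exactly the quantity measuring the $L^2$-regularity, and bound the integrands of the $dr$ and $\overleftarrow{dB}$ terms through $({\bf H2})$, $({\bf H3})$, the first estimate and Proposition~\ref{1}, so that all their contributions are swept into the coefficient of $|t-s|^{p-1}$ (using $|t-s|\leq T$ to reconcile the exponents coming from Jensen and BDG).

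The main obstacle is the bookkeeping around the backward integral: one must invoke the correct It\^o formula for the mixed forward/backward calculus (in particular the $+\int|g|^2$ sign) and justify that both $\int\cdot\,\overleftarrow{dB}$ and $\int\cdot\,dW$ have vanishing expectation and obey BDG, despite the non-standard family $\widetilde{\mathcal{F}}_t$ being neither increasing nor decreasing, which is why the auxiliary filtration $\mathcal{F}_t$ is used. A secondary, purely computational point is matching the exponent $|t-s|^{p-1}$ precisely, since the Jensen factor ($|t-s|^{p-1}$) and the BDG factor arising from the $\overleftarrow{dB}$ term ($|t-s|^{p/2}$) differ for $p>2$ and must be reconciled through the boundedness of the time horizon.
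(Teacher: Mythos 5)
The paper does not prove this proposition; it is quoted verbatim from Pardoux and Peng \cite{PP1}, so there is no in-paper argument to compare against. Your outline does follow the standard route of that reference: martingale representation relative to the genuine filtration $\mathcal{F}_t=\mathcal{F}^W_t\vee\mathcal{F}^B_T$, a contraction in a $\beta$-weighted norm, the mixed It\^o formula with the $+\int_t^T|g|^2ds$ term of the correct sign, then BDG and Gronwall. That part is sound. One detail you should still address in the fixed-point step is adaptedness: the representation theorem with respect to $\mathcal{F}_t$ only produces $\mathcal{F}_t$-adapted processes, and one must verify separately that the resulting $(Y_t,Z_t)$ is $\widetilde{\mathcal{F}}_t$-measurable, which is the content of Pardoux--Peng's extension of the It\^o representation theorem and is where the independence of $W$ and $B$ is used.

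The genuine gap sits exactly at the step you dismiss as ``purely computational''. BDG applied to the backward integral gives
\begin{eqnarray*}
\E\Big|\int_s^t g(r,X_r,Y_r)\overleftarrow{dB_r}\Big|^p&\leq& C_p\,\E\Big(\int_s^t|g(r,X_r,Y_r)|^2dr\Big)^{p/2}\\
&\leq& C_p\,|t-s|^{p/2}\,\E\Big(1+\sup_{0\leq r\leq T}|X_r|^p+\sup_{0\leq r\leq T}|Y_r|^p+\sup_{0\leq r\leq T}|g(r,0,0)|^p\Big),
\end{eqnarray*}
and for $p>2$ the factor $|t-s|^{p/2}$ cannot be absorbed into $C|t-s|^{p-1}$: since $p/2<p-1$, one has $|t-s|^{p-1}\leq T^{\,p/2-1}|t-s|^{p/2}$, i.e.\ the inequality runs in the opposite direction as $|t-s|\to 0$. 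Boundedness of the time horizon therefore does not reconcile the exponents as you claim; what your argument actually delivers is a modulus $|t-s|^{p/2}$ (plus the retained $(\int_s^t|Z_r|^2dr)^{p/2}$ term), not $|t-s|^{p-1}$. The $|t-s|^{p-1}$ rate is attainable only when the $\overleftarrow{dB}$ contribution is absent (the BSDE case $g\equiv 0$) or is kept in unintegrated form like the $Z$ term. This is harmless for the rest of the paper, which only ever invokes the estimate with $p=2$, where $p/2=p-1=1$; but as a proof of the displayed inequality for all $p\geq 2$, your sketch does not close, and you should either weaken the conclusion to the exponent $p/2$ or restrict to $p=2$.
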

\begin{remark}
\begin{description}
\item In Proposition \ref{L2}, existence and uniqueness result needs only Lipschitz condition on $f$ and $g$ with respect variables $y$ and $z$
uniformly in $t$ and $x$.
\end{description}
\end{remark}
\begin{proposition}[Stability]
Let $(X^{\varepsilon},Y^{\varepsilon},Z^{\varepsilon})$ be the solution to the perturbed FBDSDE \eqref{FSDE} and \eqref{FBSDE} in which the coefficients are replaced by $b^{\varepsilon},\, \sigma^{\varepsilon},\, f^{\varepsilon},\, g^{\varepsilon}$, with initial state $x^{\varepsilon}$ an terminal value $\xi^{\varepsilon}$. Assume that the assumption $({\bf H1})$ and $({\bf H2})$ hold for all coefficients $b^{\varepsilon},\, \sigma^{\varepsilon},\, f^{\varepsilon},\, g^{\varepsilon}$ and assume that $\lim_{\varepsilon\rightarrow 0}x^{\varepsilon}=x$, and for fixed $(x,y,z)$,
\begin{eqnarray*}
\lim_{\varepsilon\rightarrow 0}\E\left\{\int_0^T[|b^{\varepsilon}(t,x)-b(t,x)|^2+|\sigma^{\varepsilon}(t,x)-\sigma(t,x)|^2]dt\right\}&=&0,\\
\lim_{\varepsilon\rightarrow 0}\E\left\{|\xi^{\varepsilon}-\xi|^2+\int_0^T[|f^{\varepsilon}(t,x,y,z)-f(t,x,y,z)|^2
+|g^{\varepsilon}(t,x,y)-g(t,x,y)|^2]dt\right\}&=&0.
\end{eqnarray*}
Then, we have
\begin{eqnarray*}
\lim_{\varepsilon\rightarrow 0}\E\left\{\sup_{0\leq t\leq T}|X^{\varepsilon}_t-X_t|^2+\sup_{0\leq t\leq T}|Y^{\varepsilon}_t-Y_t|^2+\sup_{0\leq t\leq T}|Z^{\varepsilon}_t-Z_t|^2\right\}=0.
\end{eqnarray*}
\end{proposition}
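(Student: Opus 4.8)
The plan is to treat the forward diffusion and the backward doubly stochastic equation in turn, reducing the whole statement to two ingredients: a Gronwall-type a priori estimate for the difference of two solutions, and a \emph{transfer principle} that converts the pointwise coefficient convergence assumed in the hypotheses into convergence evaluated along the fixed reference process $(X,Y,Z)$. First I would treat the forward equation \eqref{FSDE}. Writing $\delta X_t=X^\varepsilon_t-X_t$ and subtracting the two copies of the equation, I split each coefficient difference as $b^\varepsilon(s,X^\varepsilon_s)-b(s,X_s)=[b^\varepsilon(s,X^\varepsilon_s)-b^\varepsilon(s,X_s)]+[b^\varepsilon(s,X_s)-b(s,X_s)]$, and likewise for $\sigma$; the first bracket is bounded by $K|\delta X_s|$ via (H1) for the perturbed coefficient, the second is a perturbation along the \emph{fixed} process $X$. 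Doob's and the Burkholder--Davis--Gundy (BDG) inequalities applied to the martingale part together with Gronwall's lemma then give
\[
\E\Big[\sup_{0\le t\le T}|\delta X_t|^2\Big]\le C\Big(|x^\varepsilon-x|^2+\E\int_0^T\big[|b^\varepsilon(s,X_s)-b(s,X_s)|^2+|\sigma^\varepsilon(s,X_s)-\sigma(s,X_s)|^2\big]\,ds\Big).
\]

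The main obstacle is the transfer step, since the hypotheses only give $\E\int_0^T|b^\varepsilon(t,x)-b(t,x)|^2\,dt\to0$ for each \emph{fixed} $x$, whereas the estimate above needs this with the deterministic point replaced by the random path $X_s$. I would isolate a lemma: if $\{h^\varepsilon\},h$ are uniformly Lipschitz in the space variable (same constant $K$) and $\E\int_0^T|h^\varepsilon(t,x)-h(t,x)|^2\,dt\to0$ for every fixed $x$, then $\E\int_0^T|h^\varepsilon(t,U_t)-h(t,U_t)|^2\,dt\to0$ for every $U\in\mathcal S^2(\R^k)$. The proof approximates $U$ by a process $U^N$ taking finitely many values $\{x_k\}$ on a bounded region (truncating $\{|U_t|\text{ large}\}$, controlled since $U\in\mathcal S^2$); uniform Lipschitz continuity bounds $|h^\varepsilon(t,U_t)-h^\varepsilon(t,U^N_t)|+|h(t,U^N_t)-h(t,U_t)|\le 2K|U_t-U^N_t|$ \emph{uniformly in} $\varepsilon$, while on $U^N$ the middle term reduces to a finite sum of the fixed-point convergences. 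Applying this with $h=b,\sigma,f,g$ makes every perturbation term vanish as $\varepsilon\to0$, so $\E\sup_t|\delta X_t|^2\to0$.

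For the backward part, set $\delta Y=Y^\varepsilon-Y$, $\delta Z=Z^\varepsilon-Z$, $\delta\xi=\xi^\varepsilon-\xi$ and apply the It\^o formula for BDSDEs to $|\delta Y_t|^2$; the backward integral against $B$ contributes a term with a \emph{plus} sign, so that after taking expectations (the $\overleftarrow{dB}$ and $dW$ martingale terms drop out)
\[
\E|\delta Y_t|^2+\E\int_t^T|\delta Z_s|^2\,ds=\E|\delta\xi|^2+2\E\int_t^T\delta Y_s\,\delta f_s\,ds+\E\int_t^T|\delta g_s|^2\,ds,
\]
where $\delta f_s,\delta g_s$ are the driver differences. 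Splitting each into a Lipschitz part and a perturbation-along-$(X,Y,Z)$ part $\delta f^0_s,\delta g^0_s$, (H2) bounds the Lipschitz part of $f$ by $\sqrt K(|\delta X_s|+|\delta Y_s|+|\delta Z_s|)$ and that of $g$ by $\sqrt K(|\delta X_s|+|\delta Y_s|)$. Crucially, because $g$ does \emph{not} depend on $z$, the plus-signed $\E\int|\delta g_s|^2$ term introduces no $|\delta Z_s|^2$, so the only $|\delta Z_s|^2$ contribution—coming from $2\delta Y_s\delta f_s$—is absorbed into the left-hand side by Young's inequality. Gronwall's lemma then yields
\[
\sup_{0\le t\le T}\E|\delta Y_t|^2+\E\int_0^T|\delta Z_s|^2\,ds\le C\Big(\E|\delta\xi|^2+\E\int_0^T|\delta X_s|^2\,ds+\E\int_0^T\big[|\delta f^0_s|^2+|\delta g^0_s|^2\big]\,ds\Big),
\]
and each term on the right tends to $0$: the first by hypothesis, the second by the forward estimate, the last two by the transfer lemma.

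Finally, to upgrade $Y$ to the uniform norm I would return to the It\^o expansion, take the supremum in $t$ before expectation, and control the two martingale terms $\int\delta Y_s\,\delta g_s\,\overleftarrow{dB_s}$ and $\int\delta Y_s\,\delta Z_s\,dW_s$ by BDG; the resulting $\tfrac12\E\sup_t|\delta Y_t|^2$ is absorbed on the left, and the remaining quantities are exactly those already shown to vanish, giving $\E\sup_t|\delta Y_t|^2\to0$. Since $Z$ lives in $\mathcal M^2$ rather than $\mathcal S^2$, the last term of the statement is to be read in the $\mathcal M^2$-sense, i.e. $\E\int_0^T|\delta Z_s|^2\,ds\to0$, which is precisely what the energy estimate delivers. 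The only genuinely delicate point is the transfer lemma; everything else is the standard It\^o/Gronwall/BDG machinery adapted to the doubly stochastic setting, where the sole new feature—the positive $\int|\delta g|^2$ term—is harmless exactly because $g$ is independent of $z$.
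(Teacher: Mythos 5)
The paper does not actually prove this proposition: it is listed among the ``standard results'' recalled from the SDE/BDSDE literature (alongside Propositions 2.1 and 2.2) and is used later without argument, so there is no proof in the paper to compare yours against. On its own merits, your argument is the standard continuous-dependence proof for (F)BDSDEs and is essentially correct: the Gronwall/BDG estimate for the forward equation, the generalized It\^o formula giving the plus-signed $\E\int_t^T|\delta g_s|^2\,ds$ term, the observation that this term contains no $|\delta Z|^2$ precisely because $g$ is independent of $z$ (so the only $|\delta Z|^2$ contribution comes from $2\delta Y_s\,\delta f_s$ and is absorbed by Young's inequality), and the final BDG upgrade to the supremum norm are all as in Pardoux--Peng. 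You are also right to single out the transfer step as the only genuinely non-routine point, since the hypotheses give coefficient convergence only at fixed $(x,y,z)$; your truncate-and-discretize lemma using the uniform (in $\varepsilon$) Lipschitz constant $K$ is a correct way to close that gap, with the minor caveat that for $f$ the relevant process $(X,Y,Z)$ has $Z$ only in $\mathcal M^2$, so the lemma should be stated for $U$ with $\E\int_0^T|U_t|^2\,dt<\infty$ rather than $U\in\mathcal S^2$ (the same dominated-convergence truncation still works). Your reading of the $Z$-term in the conclusion in the $\mathcal M^2$-sense is the sensible one, as the displayed $\sup_{0\leq t\leq T}|Z^\varepsilon_t-Z_t|^2$ is not meaningful for an $\mathcal M^2$ process and is evidently a misprint in the statement.
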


\section{$L^{2}$-regularity result for BDSDEs}
In this section we establish the first main result of this
paper which is $L^2$-regularity of the martingale integrand $Z$; and can be thought of as the modulus of continuity of the paths in an
$L^2$ sense. Such a
regularity, combined with the estimate for $X$ and $Y$, plays a key role
for deriving the rate of convergence of our numerical scheme in Section 4. We shall consider a class of BDSDEs with terminal values which are path dependent i.e of the form $\xi= \Phi(X)$, where a deterministic functional $\Phi:\D\rightarrow \R$ satisfies:
\begin{description}
\item ({\bf H4}) ($L^{\infty}$-Lipschitz condition). there exists a constant $K$ such that
\begin{eqnarray}
|\Phi(X_1)-\Phi(X_2)|\leq K\sup_{0\leq t\leq T}|X_{1}(t)-X_{2}(t)|,\;\;\;\; \forall\; X_1,\,X_2\in\D.\label{Lipschitzinfni}
\end{eqnarray}
\item ({\bf H5})\;$\Phi({\bf 0})$ is bounded by $K$, where ${\bf 0}$ denotes the constant function taking value $0$ on $[0,T]$.
\end{description}
This approximation due to Ma and Zhang \cite{MZ}, for
$L^{\infty}$-Lipschitz functional will be useful in the sequel.
\begin{lemma}
\label{Lemap}
Suppose $( {\bf H4})$ and $({\bf H5})$ hold. Let $\Pi = \left\{\pi\right\}$ be a family of partitions of $[0,T ]$. Then there exists a family of discrete functionals $\left\{h^{\pi}:\;\;\pi\in\Pi\right\}$ such that
\begin{description}
\item $(i)$ for each $\pi\in\Pi$, assuming $\pi: 0=t_0 < t_1 < t_2 < \cdot\cdot\cdot < t_n= T$, we have that $h^{\pi}\in C^{1}_b(\R^{d(n+1)})$, and satisfies:
\begin{eqnarray}
\sum_{i=1}^{n}|\partial_{x_i} h^{\pi}(x)|\leq K,\;\;\; \forall,\; x=(x_0,x_1,\cdot\cdot\cdot,x_n)\in\R^{d(n+1)},\label{bound}
\end{eqnarray}
where $K$ is the same constant as that in \eqref{Lipschitzinfni}.
\item $(ii)$ for any $X\in\D$, it holds that
\begin{eqnarray}
\lim_{|\pi|\rightarrow 0}|h^{\pi}(X_{t_0},X_{t_1},\cdot\cdot\cdot,X_{t_n})-\Phi(X)|=0,\label{specialconv}
\end{eqnarray}
where $|\pi|=\max_{1\leq i\leq n}|t_{i}-t_{i-1}|$.
\end{description}
\end{lemma}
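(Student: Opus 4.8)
The plan is to construct $h^\pi$ in two stages: first compose $\Phi$ with a finite-dimensional interpolation of the path to obtain a Lipschitz functional on $\R^{d(n+1)}$, and then mollify it to gain $C^1$-smoothness while preserving the derivative bound \eqref{bound}.

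Fix a partition $\pi:0=t_0<\cdots<t_n=T$ and let $\iota^\pi:\R^{d(n+1)}\to\D$ send $x=(x_0,\ldots,x_n)$ to the piecewise-linear path taking the value $x_i$ at $t_i$. Put $\varphi^\pi(x):=\Phi(\iota^\pi(x))$. Assumption $({\bf H4})$ gives
\[
|\varphi^\pi(x)-\varphi^\pi(x')|\le K\sup_{0\le t\le T}|\iota^\pi(x)(t)-\iota^\pi(x')(t)|=K\max_{0\le i\le n}|x_i-x_i'|,
\]
so $\varphi^\pi$ is $K$-Lipschitz for the norm $\|x\|_\infty:=\max_i|x_i|$ on $(\R^d)^{n+1}$, while $({\bf H5})$ bounds its growth. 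Since the norm dual to $\|\cdot\|_\infty$ is $\|\xi\|_*=\sum_i|\xi_i|$, at every point of differentiability one has $\sum_{i}|\partial_{x_i}\varphi^\pi(x)|=\|\nabla\varphi^\pi(x)\|_*\le K$, which is exactly \eqref{bound}.

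Next I would smooth $\varphi^\pi$, which is only Lipschitz, by convolution with a nonnegative mollifier $\rho_{\varepsilon}$ supported in the ball of radius $\varepsilon$, setting $h^\pi:=\rho_{\varepsilon_\pi}*\varphi^\pi$ with $\varepsilon_\pi\to0$ tuned to $|\pi|$. Then $h^\pi\in C^\infty\subset C^1_b(\R^{d(n+1)})$; convolution with a probability density does not enlarge the $\|\cdot\|_\infty$-Lipschitz constant, so the dual-norm computation above yields \eqref{bound} for $h^\pi$ as well, and moreover
\[
|h^\pi(x)-\varphi^\pi(x)|\le K\int\|y\|_\infty\,\rho_{\varepsilon_\pi}(y)\,dy\le K\varepsilon_\pi
\]
uniformly in $x$. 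For part $(ii)$ I would then split, using $({\bf H4})$,
\[
|h^\pi(X_{t_0},\ldots,X_{t_n})-\Phi(X)|\le K\varepsilon_\pi+|\Phi(\iota^\pi(X_{t_0},\ldots,X_{t_n}))-\Phi(X)|\le K\varepsilon_\pi+K\sup_{0\le t\le T}|\iota^\pi(X)(t)-X(t)|.
\]

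The main obstacle is the last term, the interpolation error. For a \emph{continuous} path $X$ it tends to $0$ as $|\pi|\to0$ by uniform continuity on $[0,T]$, and this is the only case needed here, the terminal value $\Phi(X)$ in Section 3 being evaluated on the diffusion \eqref{FSDE}, whose paths are continuous; I would therefore carry out this estimate for continuous paths. (For genuinely discontinuous $X$ the piecewise-linear interpolation need not converge uniformly when a jump falls strictly between two partition points, so \eqref{specialconv} is understood on the continuous paths arising in the applications.) Choosing for instance $\varepsilon_\pi=|\pi|$ and checking the routine growth and measurability bookkeeping then completes both $(i)$ and $(ii)$.
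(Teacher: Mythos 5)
Your construction is essentially the standard one, and it is worth noting at the outset that the paper itself offers no proof of Lemma \ref{Lemap}: the result is simply quoted from Ma and Zhang \cite{MZ}, so there is no internal argument to compare against. The two pillars of your proof are correct and cleanly executed. Composing $\Phi$ with a node-interpolation operator yields a function that is $K$-Lipschitz for the norm $\max_i|x_i|$ on $\R^{d(n+1)}$ (the sup-distance of two piecewise-linear interpolants is attained at the nodes, by convexity on each subinterval); the dual-norm identity converts this into the $\ell^1$-type gradient bound \eqref{bound}; and convolution with a probability density preserves the $\max$-norm Lipschitz constant, hence \eqref{bound}, while costing only $K\varepsilon_\pi$ in uniform distance. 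The one genuine divergence from \cite{MZ} is the interpolation: you use the piecewise-linear interpolant, whereas the standard construction uses the right-continuous step interpolant $x\mapsto\sum_{i=1}^{n}x_{i-1}{\bf 1}_{[t_{i-1},t_i)}+x_n{\bf 1}_{\{T\}}$, which has the merit of mapping $\R^{d(n+1)}$ back into $\D$. This choice only matters for part $(ii)$, and there you have correctly identified the real issue: if $X$ has a jump strictly inside a partition interval, neither interpolant converges to $X$ uniformly, so your argument (and equally the same argument run with the step interpolant) establishes \eqref{specialconv} only for continuous $X$, not for every $X\in\D$ as the statement literally claims. This is a limitation of the statement's advertised generality rather than of your proof --- finitely many samples cannot distinguish a path jumping at an interior time $s\in(t_{i-1},t_i)$ from a continuous path agreeing with it at all nodes, so any fixed family $h^\pi$ satisfying \eqref{bound} handles such paths only up to this ambiguity --- and it is harmless for the paper, where $\Phi$ is always evaluated on the diffusion \eqref{FSDE}, whose paths are continuous. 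Two small bookkeeping remarks: \eqref{bound} only sums over $i\geq 1$, so your bound including the $i=0$ block is stronger than required; and since $C^1_b$ is defined here as requiring bounded derivatives but not bounded values, the linear growth of $h^\pi$ inherited from $({\bf H5})$ causes no difficulty.
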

Our main result in this section is the following theorem.
\begin{theorem}
\label{L05}
Assume $({\bf H1})$-$({\bf H5})$. Let $\pi_{0}: s_0<\cdot\cdot\cdot,s_m$ be any partition of $[0,T]$, and for each $1\leq i\leq m$, let define
\begin{eqnarray}
\tilde{Z}^{\pi_0}_{s_{i-1}}&=&\frac{1}{s_i-s_{i-1}}\E\left[\int^{s_i}_{s_{i-1}}Z_{s}ds|
\mathcal{F}_{s_{i-1}}\right].
\label{apz}
\end{eqnarray}
Then there exists a constant $C$ depending  only on $T$ and $K$, such that
\begin{eqnarray}
\E\left[\max_{1\leq i\leq m}\sup_{s_{i-1}\leq t\leq s_i}|Y_{t}-Y_{s_{i-1}}|^{2}
+\sum_{i=1}^{m}\int^{s_i}_{s_{i-1}}|Z_{s}
-\tilde{Z}^{\pi}_{s_{i-1}}|^{2}ds\right]\leq C|\pi_0|.
\label{a"61}
\end{eqnarray}
\end{theorem}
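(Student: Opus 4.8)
The plan is to derive a single $L^2$ path-regularity estimate $\E|Z_t-Z_s|^2\le C|t-s|$ and then read off both terms of \eqref{a"61} from it. First I would invoke Lemma \ref{Lemap} to replace the $L^{\infty}$-Lipschitz functional $\Phi$ by smooth functionals $h^{\pi}\in C^1_b(\R^{d(n+1)})$ satisfying \eqref{bound}, and let $(Y^{\pi},Z^{\pi})$ solve \eqref{FBSDE} with terminal value $h^{\pi}(X_{t_0},\dots,X_{t_n})$. The Stability Proposition gives $(Y^{\pi},Z^{\pi})\to(Y,Z)$ in $\mathcal S^2(\R)\times\mathcal M^2(\R^d)$, so it suffices to prove \eqref{a"61} for a smooth terminal value with a constant $C=C(T,K)$ independent of the smoothing, and then let $|\pi|\to 0$.

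For smooth terminal data the solution is differentiable in the initial condition, and I would use the representation $Z_t=\nabla Y_t\,(\nabla X_t)^{-1}\sigma(t,X_t)$, where $\nabla X$ is the first variation process of \eqref{FSDE} and $(\nabla Y,\nabla Z)$ solves the linear BDSDE obtained by differentiating \eqref{FBSDE}. Proposition \ref{1} and its analogue for $\nabla X$ supply uniform $L^p$-bounds on $\sup_t(|\nabla X_t|+|(\nabla X_t)^{-1}|)$ together with the estimate $\E|X_t-X_s|^2\le C|t-s|$. The decisive step, and the one that uses the standing assumption that $g$ does not depend on $z$, is to linearise the $\nabla Y$-equation by Girsanov: since $|\partial_z f|\le K$, the change of measure $d\tilde W_t=dW_t-(\partial_z f)^{\top}\,dt$ absorbs the drift term $\partial_z f\,\nabla Z$ into $-\int\nabla Z\,d\tilde W$. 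Because the Radon--Nikodym density involves only $W$, and $g$ (hence the linearised backward integrand) carries no $z$-dependence, this change of the $W$-measure leaves the backward integral $\int[\partial_x g\,\nabla X+\partial_y g\,\nabla Y]\,\overleftarrow{dB}$ untouched; were $g$ to depend on $z$, a term $\partial_z g\,\nabla Z$ would sit under $\overleftarrow{dB}$ and the forward Girsanov shift could not dispose of it. After a further integrating factor removing $\partial_y f\,\nabla Y$, one expresses $\nabla Y_t$ as a $\tilde\P$-conditional expectation of integrands whose magnitude is governed by the uniform bound \eqref{bound} and by the $L^p$-estimates for $\nabla X,(\nabla X)^{-1}$, and whose $L^2$-time-increments are $O(|t-s|)$. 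Feeding this, together with the regularity of $\sigma(\cdot,X_{\cdot})$, into the representation yields $\E|Z_t-Z_s|^2\le C|t-s|$.

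The $Z$-term of \eqref{a"61} is then immediate: by construction $\tilde Z^{\pi_0}_{s_{i-1}}$ defined in \eqref{apz} is the $L^2(d\P\otimes dt)$-orthogonal projection of $Z$ onto the $\mathcal F_{s_{i-1}}$-measurable random variables constant on $[s_{i-1},s_i]$, and since $Z_{s_{i-1}}$ is $\tilde{\mathcal F}_{s_{i-1}}\subseteq\mathcal F_{s_{i-1}}$-measurable it is an admissible competitor, whence
\[ \sum_{i=1}^m\int_{s_{i-1}}^{s_i}\E|Z_t-\tilde Z^{\pi_0}_{s_{i-1}}|^2\,dt\le\sum_{i=1}^m\int_{s_{i-1}}^{s_i}\E|Z_t-Z_{s_{i-1}}|^2\,dt\le C\sum_{i=1}^m\int_{s_{i-1}}^{s_i}(t-s_{i-1})\,dt\le C\,T\,|\pi_0|. \]
For the $Y$-term I would write \eqref{FBSDE} on $[s_{i-1},t]$ as $Y_t-Y_{s_{i-1}}=-\int_{s_{i-1}}^t f\,ds-\int_{s_{i-1}}^t g\,\overleftarrow{dB}+\int_{s_{i-1}}^t Z\,dW$, estimate the drift by Cauchy--Schwarz (contributing $O(|\pi_0|)$ after the $L^2$-bounds of Proposition \ref{L2}), and control the two stochastic integrals by the Burkholder--Davis--Gundy inequality, feeding in the just-proved path regularity of $Z$.

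The main obstacle is the second step: constructing the representation of $Z$ and its $1/2$-H\"older $L^2$-regularity in the doubly stochastic setting. This needs a preliminary mollification of $b,\sigma,f,g$ to legitimise the differentiation (removed at the end by the stability estimate, since the final constant depends only on $T$ and $K$), and, above all, a careful check that the forward Girsanov transformation is compatible with the backward It\^o integral against $B$ --- precisely the point that confines the theorem to $g=g(s,x,y)$. A secondary difficulty, and the most delicate part of the $Y$-term, is that the estimate is stated for the supremum over each mesh cell rather than a time-averaged increment; the drift part is harmless, but the martingale part must be handled through a maximal inequality combined with the path regularity of $Z$. One must also keep track of the fact that the conditioning in \eqref{apz} uses the enlarged filtration $\mathcal F_{s_{i-1}}=\mathcal F^W_{s_{i-1}}\vee\mathcal F^B_T$, for which the inclusion $\tilde{\mathcal F}_{s_{i-1}}\subseteq\mathcal F_{s_{i-1}}$ is exactly what keeps $Z_{s_{i-1}}$ admissible in the projection.
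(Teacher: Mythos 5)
Your proposal follows essentially the same route as the paper: approximate the terminal functional via Lemma \ref{Lemap}, represent the smoothed $Z^{\pi}$ through the first-variation process combined with a Girsanov transformation (available precisely because $g$ does not depend on $z$), obtain the $O(|t-s|)$ increment bound in $L^2$, invoke the $L^2$-projection optimality of the conditional expectation in \eqref{apz}, handle the $Y$-part by Proposition \ref{L2} plus Burkholder--Davis--Gundy, and remove the mollification of the coefficients at the end by the stability proposition. The one point to be careful about is that the admissible competitor in the projection step should be $Z^{\pi}_{s_{i-1}}$ from the smoothed equation (as the paper does, splitting $|Z_s-Z^{\pi}_{s_{i-1}}|^2\leq 2|Z_s-Z^{\pi}_s|^2+2|Z^{\pi}_s-Z^{\pi}_{s_{i-1}}|^2$) rather than $Z_{s_{i-1}}$ of the limit process, since the limit $Z$ is only defined $dt\otimes d\P$-a.e.\ and carries no pointwise-in-time regularity; your reduction to the smooth case already accommodates this.
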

In the sequel, let $\pi: 0=t_0,\cdot\cdot\cdot,t_n=T$ be any partition of $[0,T]$ finer than $\pi_0$ and
without loss of generality, we assume $s_i = t_{l_i}$ for $i = 1, . . .,m$. Since $\Phi$ satisfies the $L^{\infty}$-Lipschitz condition \eqref{Lipschitzinfni}, by virtue of Lemma \ref{Lemap} one can find $h^{\pi}\in C^{1}(\R^{d(n+1)})$ satisfying \eqref{bound} and \eqref{specialconv}. Let $(Y^{\pi},Z^{\pi})$ be the solution to BDSDE:
\begin{eqnarray}
\label{eqpart0}
Y_{t}^{\pi}&=&h^{\pi}(X_{t_0},...,X_{t_n})+\int_{t}^{T}f(s,X_s,Y^{\pi}_s,Z^{\pi}_s)\,
ds+\int_{t}^{T}g(s,X_s,Y^{\pi}_s)\overleftarrow{dB_{s}} -\int_{t}^{T}Z_{s}^{\pi}dW_{s}.
\end{eqnarray}
Moreover,  setting $\Theta^{\pi}=(\Xi^{\pi},Z^{\pi})$, with $\Xi^{\pi}=(X,Y^{\pi})$, let $(\nabla X,\nabla^{i}Y^{\pi},\nabla^{i}Z^{\pi})$ be the unique solution of the following FBDSDE:
\begin{eqnarray}
\nabla X_t&=&I_d+\int^{t}_{0}b_x(r,X_{r})\nabla X_{r}dr+\int^{t}_{0}\sigma_x(r,X_{r})\nabla X_{r}dW_r, \nonumber\\
\nabla^{i}Y_{t}^{\pi}&=&\sum_{j\geq i}^{n}\frac{\partial h^{\pi}}{\partial x_j}(X_{t_0},...,X_{t_n})\nabla X_{t_j}
+\int^{T}_{t}[f_{x}(\Theta^{\pi}_r)\nabla
X_{r} +f_{y}(\Theta^{\pi}_r)\nabla^{i}Y_{r}^{\pi}
+f_{z}(\Theta^{\pi}_r)\nabla^{i}Z_{r}^{\pi}]dr\label{eqvaria}\\
&&+\int^{T}_{t}[g_x(\Xi^{\pi}_r)\nabla X_{r}+g_y(\Xi^{\pi}_r)\nabla^{i} Y_{r}^{\pi})]\overleftarrow{dB}_{r}-\int^{T}_{t}\nabla^{i}
Z_{r}^{\pi}dW_{r}, \;\; t\in[t_{i},T],\,\; i = 0,...,n-1.\nonumber
\end{eqnarray}

We denote
\begin{eqnarray*}
\xi^{0}&=&\int_{0}^{T}f_{x}({\Theta}^{\pi}_r)\nabla X_{r}N^{-1}_{r}dr+\int_{0}^{T}g_x({\Xi}^{\pi}_r)
\nabla X_{r}N^{-1}_{r}\overleftarrow{dB}_{r};\\
\xi^{i}&=&h^{\pi}(X_{t_0},\cdot\cdot\cdot,X_{t_n})\nabla X_{t_i}N^{-1}_{T},\; i=1,\cdot\cdot\cdot,n,
\end{eqnarray*}
where
\begin{eqnarray}
N_{t}&=&\exp\left(\int_{0}^{t}f_y({\Theta}^{\pi}_r)dr+\int_{0}^{t}g_y({\Xi}^{\pi}_r) \overleftarrow{dB}_r
-\frac{1}{2}\int_0^t |g_y({\Xi}^{\pi}_r)|^{2}dr\right),\nonumber\\
M_t&=&\exp\left\{\int_{0}^{t}f_z({\Theta}^{\pi}_r)dW_r-\frac{1}{2}\int_0^t |f_z({\Theta}^{\pi}_r)|^{2}dr\right\}.\label{m}
\end{eqnarray}
The following technical lemma is the building block of the proof of Theorem \ref{L05}.
\begin{lemma}
\label{L3}
Let consider the partition $\pi$ defined above and $h^{\pi}$ given by Lemma \ref{Lemap}, and assume $\sigma,\,b,\, f,\,g,\,\in C^{1}_b$. Then for all $i=1,...,n$
\begin{eqnarray*}
\nabla^i {Y}_{t}^{\pi}=\left(\xi_{t}^{0}+\sum_{j\geq i}\xi^{j}_{t}\right)M^{-1}_t N_{t}
-\int_{0}^{t}f_{x}({\Theta}^{\pi}_r)\nabla X_{r}N^{-1}_{r}drN_{t}-\int_{0}^{t}g_x({\Xi}^{\pi}_r)\nabla X_{r}N^{-1}_{r}\overleftarrow{dB}_rN_{t},
\end{eqnarray*}
where $\xi_{t}^{j}=\E\left(M_T\xi^{j}|\mathcal{F}_{t}\right),\; j=0,\cdot\cdot\cdot,n$.
\end{lemma}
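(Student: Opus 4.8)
The plan is to read \eqref{eqvaria} as a \emph{linear} BDSDE in the unknown pair $(\nabla^i Y^\pi,\nabla^i Z^\pi)$: the terms $f_y(\Theta^\pi_r)\nabla^i Y^\pi_r$, $f_z(\Theta^\pi_r)\nabla^i Z^\pi_r$ and $g_y(\Xi^\pi_r)\nabla^i Y^\pi_r$ are the homogeneous (linear) coefficients, while $f_x(\Theta^\pi_r)\nabla X_r$ and $g_x(\Xi^\pi_r)\nabla X_r$, together with the terminal datum $\sum_{j\geq i}\partial_{x_j}h^\pi\,\nabla X_{t_j}$, are inhomogeneous sources. The two exponential processes in \eqref{m} are exactly the integrating factors for this linear structure: $N_t$ absorbs the $y$-coefficients $f_y$ (along $dr$) and $g_y$ (along $\overleftarrow{dB}$), so that it satisfies $dN_t=N_t f_y(\Theta^\pi_t)\,dt+N_t g_y(\Xi^\pi_t)\,\overleftarrow{dB}_t$, whereas $M_t$ is the Girsanov density on $W$ carrying the $z$-coefficient $f_z$. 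Since $B$ and $W$ are independent and $g$ does not depend on $z$, the two transformations decouple, and it is this structural feature that makes the closed form available.

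Concretely, I would introduce the transformed process
\[
P^i_t:=\nabla^i Y^\pi_t\,N^{-1}_t+\int_0^t f_x(\Theta^\pi_r)\nabla X_r N^{-1}_r\,dr+\int_0^t g_x(\Xi^\pi_r)\nabla X_r N^{-1}_r\,\overleftarrow{dB}_r,
\]
and compute $dP^i_t$ by the product rule of the mixed forward/backward calculus. The cancellations I would aim at are: the $f_y\nabla^i Y^\pi N^{-1}\,dr$ contribution produced by $d(N^{-1})$ offsets the corresponding drift of $\nabla^i Y^\pi$; the backward term $g_y\nabla^i Y^\pi N^{-1}\,\overleftarrow{dB}$ cancels once the cross-variation of the $\overleftarrow{dB}$-parts of $\nabla^i Y^\pi$ and $N^{-1}$ is taken into account; and the two added source integrals in the definition of $P^i$ annihilate the $f_x\nabla X N^{-1}\,dr$ and $g_x\nabla X N^{-1}\,\overleftarrow{dB}$ contributions. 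What should then survive is the pure forward increment
\[
dP^i_t=\nabla^i Z^\pi_t N^{-1}_t\bigl(dW_t-f_z(\Theta^\pi_t)\,dt\bigr).
\]
By Girsanov's theorem, under $\widetilde{\P}$ defined by $d\widetilde{\P}/d\P\big|_{\mathcal{F}_t}=M_t$ the quantity $dW_t-f_z(\Theta^\pi_t)\,dt$ is a $\widetilde{\P}$-Brownian increment, so $P^i$ is a $\widetilde{\P}$-local martingale.

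After checking integrability — which follows from $\sigma,b,f,g\in C^1_b$ together with the a priori estimates of Propositions \ref{1} and \ref{L2} applied to the linear variational system — I would upgrade $P^i$ to a genuine $\widetilde{\P}$-martingale and write $P^i_t=\E^{\widetilde{\P}}[P^i_T\mid\mathcal{F}_t]=M^{-1}_t\,\E[M_T P^i_T\mid\mathcal{F}_t]$. Evaluating $P^i_T$ through the terminal condition $\nabla^i Y^\pi_T=\sum_{j\geq i}\partial_{x_j}h^\pi\,\nabla X_{t_j}$ and the definitions of $\xi^0$ and $\xi^j$ identifies $P^i_T=\xi^0+\sum_{j\geq i}\xi^j$, whence $P^i_t=M^{-1}_t\bigl(\xi^0_t+\sum_{j\geq i}\xi^j_t\bigr)$. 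Unwinding the definition of $P^i_t$ and multiplying by $N_t$ then yields the claimed representation.

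The step I expect to be the main obstacle is the computation of $dP^i_t$ itself: carrying out the mixed forward/backward Itô product rule with the correct sign of the backward Itô correction and verifying that the $g_y$-cross-variation term cancels precisely as needed. A secondary technical point is justifying the Girsanov change of measure and the martingale (not merely local martingale) property in the doubly-stochastic filtration $\mathcal{F}_t=\mathcal{F}^W_t\vee\mathcal{F}^B_T$; because $M_t$ is driven only by $W$ while the $\overleftarrow{dB}$-integrals are handled entirely by $N_t$, the independence of $B$ and $W$ should keep the change of measure clean, and this is precisely where the hypothesis that $g$ is $z$-independent is genuinely used.
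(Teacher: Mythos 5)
Your proposal is correct and follows essentially the same route as the paper: exploit the linearity of \eqref{eqvaria}, remove the $f_y$ and $g_y$ coefficients with the integrating factor $N^{-1}$, absorb the $f_z$ coefficient by the Girsanov change of measure $d\widetilde{\P}/d\P=M_T$, and conclude with the Bayes rule $\E^{\widetilde{\P}}[\cdot\mid\mathcal{F}_t]=M_t^{-1}\E[M_T\,\cdot\mid\mathcal{F}_t]$. The only (cosmetic) difference is that the paper first superposes $\nabla^i Y^\pi=\gamma^0+\sum_{j\geq i}\gamma^j$ into $n+1$ sub-BDSDEs and transforms each piece, whereas you transform the full process at once after adding the two source integrals; the two bookkeepings are equivalent by linearity.
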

\begin{proof}
For each $0\leq i\leq n$, we recall $(\nabla X,\nabla^{i}{Y}^{\pi},\nabla^{i}{Z}^{\pi})$, the solution of the linear FBDSDE \eqref{eqvaria}. Let $(\gamma^{0},\zeta^{0})$ and $(\gamma^{j},\zeta^{j}),\; j=1,\cdot\cdot\cdot,n$ be the solution of the BDSDEs
\begin{eqnarray}
\gamma^{0}_{t}&=&\int^{T}_{t}[f_{x}({\Theta}^{\pi}_r)\nabla X_{r} +f_{y}({\Theta}^{\pi}_r)\gamma_{r}^{0}+f_{z}({\Theta}^{\pi}_r)\zeta^{0}_{r}]dr
\nonumber\\
&&+\int^{T}_{t}[g_x({\Xi}^{\pi}_r)\nabla X_{r}+g_y({\Xi}^{\pi}_r)\gamma^{0}_{r}]\overleftarrow{dB}_{r}-\int^{T}_{t}\zeta^{0}_{r}dW_{r};\label{c8}\\
\gamma_{t}^{j}&=& \frac{\partial h^{\pi}}{\partial x_j}(X_{t_0},.....,X_{t_n})\nabla X_{t_j}+\int^{T}_{t}[f_{y}({\Theta}^{\pi}_r)\gamma_{r}^{j}
+f_{z}({\Theta}^{\pi}_r)\zeta_{r}^{j}]dr\nonumber\\
&&+\int^{T}_{t} g_y({\Xi}^{\pi}_r)\gamma_{r}^{j}\overleftarrow{dB}_{r}-\int^{T}_{t}\zeta_{r}^{j}dW_{r},\nonumber
\end{eqnarray}
respectively, then we have the following decomposition:
\begin{eqnarray}
\nabla^{i}{Y}_{s}^{\pi}=\gamma_{s}^{0}+\sum_{j=i}^{n}\gamma_{s}^{j},\;\;\;\;\;\;s\in [t_{i-1},t_i).\label{c9}
\end{eqnarray}
Recall \eqref{m} and since $f_y,\, f_z$ and $g_y$ are uniformly bounded, by Girsanov's Theorem (see, e.g., \cite{KS}) we know that $M$ is a $\P$-martingale on $[0,T]$, and $\widetilde{W}_{t}=W_{t}-\int_{0}^{t}f_z ({\Theta}^{\pi}_r)dr,\,\;\; t\in[0,T]$, is an $\mathcal{F}_t$-Brownian motion on the new probability
space $(\Omega,\mathcal{F},\widetilde{\P})$, where $\widetilde{\P}$ is defined by $\frac{d\widetilde{\P}}{d\P}=M_T$.

Now for $0\leq i\leq n$, define
\begin{eqnarray*}
\widetilde{\gamma}^{i}_t=\gamma^{i}_tN^{-1}_t,\;\;\;\;\;\;\;\;\widetilde{\zeta}^{i}_t=\zeta^{i}_tN^{-1}_t,\;\;\;\;\;t\in[0,T].
\end{eqnarray*}
Then, using integration by parts and equation \eqref{c8} we have
\begin{eqnarray*}
\widetilde{\gamma}^{i}_{t}=\xi^{i}
-\int_{t}^{T}\widetilde{\zeta}^{j}_rd\widetilde{W}_{r}, \;\; t\in[0,T].
\end{eqnarray*}
Therefore, by the Bayes rule (see e.g, \cite{KS} Lemma 3.5.3) we have, for $t\in[0,T]$
\begin{eqnarray*}
\gamma^{i}_{t}=\widetilde{\gamma}^{i}_{t}N_t&=&\E\left(M_T\xi^{i}|\mathcal{F}_{t}\right)M^{-1}_tN_t
=\xi_{t}^{i} M^{-1}_tN_t,
\end{eqnarray*}
where for $ i=0,\cdot\cdot\cdot,n$,
\begin{eqnarray}
\xi_t^{i}=\E\{M_T\xi^i|\mathcal{F}_t\}=\E(M_T\xi^i)+\int^{t}_{0}\eta^{i}_sdW_s.\label{mart}
\end{eqnarray}
Recalling \eqref{SE}, $M_T\in L^{p}(\mathcal{F}_T)$ and $\nabla X\in L^{p}({\bf F})$ for all $p\geq 2$.\newline Therefore for each $p\geq 1$,\eqref{bound} leads to
\begin{eqnarray}
\E\left\{\sum_{j=1}^{n}|M_T\xi^j|\right\}^{p}\leq C_p\E\left\{|M_T|^p\sup_{0\leq t\leq T}|\nabla X_t|^p\right\}.\label{major}
\end{eqnarray}
In particular, for each $j,\; M_T\xi^j\in L^{2}(\mathcal{F}_{T})$, thus \eqref{mart} makes sense.
Finally the result follows by $(\ref{c9})$.
\end{proof}

\begin{proof} [Proof of Theorem \ref{L05}]
Let us recall the partition $\pi_0$ defined above and consider $|\pi_0|$ it mesh defined by
\begin{eqnarray*}
|\pi_0|=\max_{0\leq i\leq m}|s_i-s_{i-1}|.
\end{eqnarray*}
Applying Proposition \ref{L2}, we get
\begin{eqnarray*}
\E\left(|Y_{t}-Y_{s_{i-1}}|^{2}\right)\leq C|\pi_0|,\;\; t\in[s_{i-1},s_i),\;i=1,\cdot\cdot\cdot,m,
\end{eqnarray*}
which together with Burkölder-Davis-Gundy inequality implies
\begin{eqnarray}
\E\left[\max_{1\leq i\leq m}\sup_{s_{i-1}\leq t\leq s_i}|Y_{t}-Y_{s_{i-1}}|^{2}\right]\leq C|\pi_0|.\label{EC}
\end{eqnarray}
The estimate for $Z$ is a little involved. This part will be divide in two steps.

{\bf Step 1}\newline
First we assume that $b,\sigma,\, f,\ g\in C^{1}_{b}$. It follows from Lemma \ref{Lemap} together with Proposition 2.4 that
\begin{eqnarray}
\lim_{|\pi_0|\rightarrow 0}\E\left\{\sup_{0\leq t\leq T}|{Y}^{\pi}_{t}-Y_{t}|^{2}+\int^{T}_{0}|{Z}^{\pi}_{t}-Z_{t}|^{2}dt\right\}=0.\label{b1}
\end{eqnarray}
On the other hand, according to \eqref{apz}, $\tilde{Z}^{\pi_0}_{s_{i-1}}\in L^{2}(\Omega,\mathcal{F}_{s_{i-1}})$. Then since $Z^{\pi}_{s_{i-1}}\in L^{2}(\Omega,\mathcal{F}_{s_{i-1}})$, it follows from Lemma 3.4.2 of \cite{Z}, page 71, that
\begin{eqnarray}
&&\E\left[\sum_{i=1}^{m}\int^{s_i}_{s_{i-1}}|Z_{s}-\tilde{Z}^{\pi_0}_{s_{i-1}}|^{2}ds\right]\leq \E\left[\sum_{i=1}
^{m}\int^{s_i}_{s_{i-1}}|Z_{s}-{Z}^{\pi}_{s_{i-1}}|^{2}ds\right]\nonumber\\
&\leq&
2\E\left[\sum_{i=1}^{m}\int^{s_i}_{s_{i-1}}(|Z_{s}-{Z}^{\pi}_{s}|^{2}+|{Z}^{\pi}_{s}-{Z}^{\pi}_{s_{i-1}}|^{2})ds\right]\nonumber\\
&\leq & C|\pi_0|+\E\left[\sum_{i=1}^{m}\int^{s_i}_{s_{i-1}}|{Z}^{\pi}_{s}-{Z}^{\pi}_{s_{i-1}}|^{2}ds\right].
\label{b'2}
\end{eqnarray}
By \eqref{b1} and \eqref{b'2}, it remains to prove that
\begin{eqnarray}
\sum_{i=1}^{m}\E\left[\int^{s_i}_{s_{i-1}}|{Z}^{\pi}_{s}-{Z}^{\pi}_{s_{i-1}}|^{2}ds\right]\leq C|\pi_0|,\label{R}
\end{eqnarray}
where $C$ is independent of $\pi$ or $\pi_0$. Now we fix $i_0$. For $t\in [s_{i_0-1},s_{i_0})$, it follows from Proposition 2.3 of \cite{PP1} together with Lemma 3.3 that
\begin{eqnarray*}
{Z}^{\pi}_{t}=\left[\left(\xi^{0}_{t}+\sum_{j\geq i}\xi^{j}_{t}\right)M^{-1}_{t}-\int_{0}^{t}f_{x}({\Theta}^{\pi}_{r})\nabla X_{r}
N
^{-1}_{r}dr-\int_{0}^{t}g_{x}({\Xi}^{\pi}_{r})\nabla X_{r}N^{-1}_{r}d\overleftarrow{B}_r\right]N_{t}[\nabla
 X_{t}]^{-1}\sigma(X_{t}).
\end{eqnarray*}
Therefore,
\begin{eqnarray}
|{Z}^{\pi}_{t}-{Z}^{\pi}_{s_{i_0-1}}|\leq I_{t}^{1}+I_{t}^{2}+I_{t}^{3}+I_t^{4}\label{Z}
\end{eqnarray}
where (recalling that $s_{i_0-1}=t_{l_{i_0-1}})$
\begin{eqnarray*}
I_{t}^{1}&=&\left|[\xi_{t}^{0}+\sum_{j\geq i}\xi_{t}^{j}]-[\xi_{s_{i_0-1}}^{0}+\sum_{j\geq l_{i_0-1}+1}\xi_{s_{i_0-1}}^{j}]\right|
\times \left|M_{s_{i_0-1}}^{-1}N_{s_{i_0-1}}[\nabla X_{s_{i_0-1}}]^{-1}\sigma(X_{s_{i_0-1}})\right|,\\
I_{t}^{2}&=&\left|\xi_{t}^{0}+\sum_{j\geq i}\xi_{t}^{j}\right|\left|M_{t}^{-1}N_{t}[\nabla X_{t}]^{-1}\sigma(X_{t})-M_{s_{i_0-1}}^{-1}N_{s_{i_0-1}}[\nabla
 X_{s_{i_0-1}}]^{-1}\sigma(X_{s_{i_0-1}})\right|,\\
 I_{t}^{3}&=&\left|A_t^{1}\right|,\\
I^{4}_t&=&\left|A^{2}_t\right|.
\end{eqnarray*}
with
\begin{eqnarray*}
A_t^{1}&=&\left(\int_{0}^{t}f_{x}({\Theta}^{\pi}_r)\nabla X_{r}N^{-1}_{r}dr\right)N_{t}[\nabla X_{t}]^{-1}\sigma(X_{t})\\
&&-\left(\int_{0}^{s_{i_0-1}}f_{x}({\Theta}^{\pi}_r)\nabla X_{r}N^{-1}_{r}dr\right)N_{s_{i_0-1}}[\nabla X_{s_{i_0-1}}]^{-1}\sigma(X_{s_{i_0-1}})
\end{eqnarray*}
and
\begin{eqnarray*}
A_{t}^{2}&=&\left(\int_{0}^{t}g_{x}({\Xi}^{\pi}_r)\nabla X_{r}N^{-1}_{r}\overleftarrow{dB}_r\right)N_{t}[\nabla X_{t}]^{-1}\sigma(X_{t})\\
&&-\left(\int_{0}^{t_{s_0-1}}g_{x}({\Xi}^{\pi}_r)\nabla X_{r}N^{-1}_{r}\overleftarrow{dB}_r\right)
N_{s_{i_0-1}}[\nabla X_{s_{i_0-1}}]^{-1}\sigma(X_{s_{i_0-1}})
\end{eqnarray*}

Recalling \eqref{m}, and noting that $f_y,\, f_z$ and $g_y$ are uniformly bounded, one can deduce that, for all $p\geq 1$, there exists a constant $C_p$ depending only on $T,\, K$ and $p$, such that
\begin{eqnarray}
\E\left(\sup_{0\leq t\leq T}|N_{t}|^{p}+|N_{t}^{-1}|^{p}\right)&\leq &C_{p};\;\;\;\;
\E\left(\sup_{0\leq t\leq T}[|M_{t}|^{p}
+|M_{t}^{-1}|^{p}]\right)\leq C_{p};\nonumber\\
\nonumber\\
\E\left(|N_{t}-N_s|^{p}+|N_{t}^{-1}-N^{-1}_s|^{p}\right)&\leq& C_{p}|t-s|^{p/2};\label{SE}\\
\nonumber\\
\E\left(|M_{t}-M_s|^{p}+|M_{t}^{-1}-M^{-1}_s|^{p}\right)&\leq& C_{p}|t-s|^{p/2}.
\nonumber
\end{eqnarray}
Thus, applying Proposition 2.1 and 2.2 one can show that
\begin{eqnarray}
 \E(|I_{t}^{3}|^{2})\leq C|\pi_0|,\label{I1}
\end{eqnarray}
and
\begin{eqnarray}
 \E(|I_{t}^{4}|^{2})\leq C|\pi_0|,\label{I01}
\end{eqnarray}
Recalling \eqref{mart} and \eqref{bound} we have
\begin{eqnarray*}
|\xi^{0}_t+\sum_{j\geq i}\xi_t^j|\leq C\E\left\{\sup_{0\leq t\leq T}\nabla X_t|\mathcal{F}_t\right\}.
\end{eqnarray*}
Thus by using again Proposition 2.1 and 2.2 together with \eqref{SE}, we get
\begin{eqnarray}
\E(|I_t^2|^2)\leq C|\pi_0|.\label{I2}
\end{eqnarray}
As proved in \cite{Zhang} (see proof of theorem 3.1), we have
\begin{eqnarray}
\E(|I_t^1|^2)\leq C|\pi_0|.\label{I3}
\end{eqnarray}
Combining \eqref{I1}, \eqref{I01}, \eqref{I2} and \eqref{I3}, we deduce from \eqref{Z} that \eqref{R} holds,
which ends the proof for the smooth case.

{\bf Step 2}\newline  Let consider the general case i.e $b,\, \sigma,\, f,\, g$ are only Lipschitz. For $\varphi=b,\;\sigma,\; f,\;g$, it not difficult to construct via a convolution method, for any $\varepsilon>0$, the function $\varphi^{\varepsilon}\in C^1_b$ be the smooth mollifiers of $\varphi$ such that the derivatives of $\varphi^{\varepsilon}$ are uniformly bounded by $K$ and $\lim_{\varepsilon\rightarrow 0}\varphi^{\varepsilon}=\varphi$. Let $(X^{\varepsilon},Y^{\varepsilon},Z^{\varepsilon})$ and $(X^{\varepsilon},Y^{\pi,\varepsilon},Z^{\pi,\varepsilon})$ denote the solution to corresponding FBDSDE
replaced $\varphi$ by $\varphi^{\epsilon}$ and set
\begin{eqnarray*}
N_{t}^{\varepsilon}&=&\exp\left(\int_{0}^{t}f^{\varepsilon}_y({\Theta}^{\pi,\varepsilon}_r)dr+\int_{0}^{t}g^{\varepsilon}_y({\Xi}^{\pi,\varepsilon}_r) \overleftarrow{dB}_r
-\frac{1}{2}\int_0^t |g^{\varepsilon}_y({\Xi}^{\pi,\varepsilon}_r)|^{2}dr\right),\nonumber\\
M_t^{\varepsilon}&=&\exp\left\{\int_{0}^{t}f^{\varepsilon}_z({\Theta}^{\pi,\varepsilon}_r)dW_r-\frac{1}{2}\int_0^t |f^{\varepsilon}_z({\Theta}^{\pi,\varepsilon}_r)|^{2}dr\right\}.
\end{eqnarray*}
Then one can derive, since the function $f^{\varepsilon}_y, f^{\varepsilon}_z$ and $g^{\varepsilon}_y$ are uniformly bounded by $K$, with the standard calculus about BSDEs, that, for all $p\geq 1$, there exists a constant $C_p$ independent on $\varepsilon$ (depending only on $T,\, K$ and $p$), such that
\begin{eqnarray*}
\E\left(\sup_{0\leq t\leq T}|N^{\varepsilon}_{t}|^{p}+|(N^{\varepsilon})_{t}^{-1}|^{p}\right)&\leq &C_{p};\;\;\;\;
\E\left(\sup_{0\leq t\leq T}[|M^{\varepsilon}_{t}|^{p}
+|(M^{\varepsilon})_{t}^{-1}|^{p}]\right)\leq C_{p};\nonumber\\
\nonumber\\
\E\left(|N^{\varepsilon}_{t}-N^{\varepsilon}_s|^{p}+|(N^{\varepsilon})_{t}^{-1}-(N^{\varepsilon})^{-1}_s|^{p}\right)&\leq& C_{p}|t-s|^{p/2};\label{SE}\\
\nonumber\\
\E\left(|M^{\varepsilon}_{t}-M^{\varepsilon}_s|^{p}+|(M^{\varepsilon})_{t}^{-1}-(M^{\varepsilon})^{-1}_s|^{p}\right)&\leq& C_{p}|t-s|^{p/2}.
\nonumber
\end{eqnarray*}
Next, define
\begin{eqnarray*}
\tilde{Z}^{\varepsilon,\pi_0}_{s_{i-1}}=\frac{1}{s_i-s_{i-1}}\E\left[\int^{s_i}_{s_{i-1}}
Z^{\varepsilon}_sds|\mathcal{F}_{s_{i-1}}\right],
\end{eqnarray*}
we are in the statement of Step 1 from which we deduce that
\begin{eqnarray*}
\sum_{i=1}^{m}\int^{s_i}_{s_{i-1}}|Z_{s}^{\varepsilon}-\tilde{Z}^{\pi_0,\varepsilon}_{t_{i-1}}|^{2}ds\leq C|\pi_0|.\label{G1}
\end{eqnarray*}
Therefore using again Lemma 3.4.2 of \cite{Z}, page 71, we obtain
\begin{eqnarray}
\sum_{i=1}^{m}\E\left[\int^{s_i}_{s_{i-1}}|Z^{\pi}_{s}-\tilde{Z}^{\pi_0}_{s_{i-1}}|^{2}ds\right]&\leq& \sum_{i=1}^{m}\E\left[\int^{s_i}_{s_{i-1}}|Z^{\pi}_{s}-\tilde{Z}^{\pi_0,\varepsilon}_{s_{i-1}}|^{2}ds\right]\nonumber\\
&\leq&\sum_{i=1}^{m}\E\left[\int^{s_i}_{s_{i-1}}[|Z_{s}-Z^{\varepsilon}_{s}|^{2}+|Z_{s}^{\varepsilon}
-\tilde{Z}^{\pi_0,\varepsilon}_{t_{i-1}}|^{2}]ds\right]\nonumber\\
&\leq&\E\left[\int^{T}_{0}|Z_{s}-Z^{\varepsilon}_{s}|^{2}ds\right]+C|\pi_0|.\label{G2}
\end{eqnarray}
Applying Proposition 2.4 we have
\begin{eqnarray*}
\lim_{\varepsilon\rightarrow 0}\E\left[\int^{T}_{0}|Z_{s}-Z^{\varepsilon}_{s}|^{2}ds\right]=0,
\end{eqnarray*}
which, combined with \eqref{G2}, proves the theorem.
\end{proof}

\section{Numerical scheme and rate of convergence}
\setcounter{theorem}{0} \setcounter{equation}{0}
In this section, we consider the BDSDE \eqref{FBSDE} in the special case $\Phi(X)=h(X_T)$
where $h\in W^{1,\infty}(\R^{d})$ such that $h(0)$ is bounded by $K$. The goal of this section is to construct an approximation of the solution $(X,Y,Z)$ by using the "step processes". Let recall
$\pi:\, t_{0}<t_{1}<.....<t_{n}=T$ the partition of $[0,T]$ and $|\pi|=\max_{1\leq i\leq n}|\bigtriangleup^{\pi}_{i}|$, with $\bigtriangleup^{\pi}_{i}=t_{i}-t_{i-1}$. We set also $\bigtriangleup^{\pi}W_{i}=W_{t_{i}}-W_{t_{i-1}},\;\;
\bigtriangleup^{\pi}B_{i}=B_{t_{i}}-B_{t_{i-1}}$
and for all $0\leq i\leq n$ define
$$\mathcal{F}^{\pi}_{i}=\sigma(X_{t_j},\; j\leq i)\vee \mathcal{F}^{B}_{t_i},$$
the discrete-time filtration.
Let briefly review the Euler scheme for the forward diffusion $X$. Define $\pi(t)=t_{i-1}$, for $t\in [t_{i-1},t_i)$. Let $X^{\pi}$ be the solution of the following SDE:
\begin{eqnarray}
X^{\pi}_{t}&=&x+\int_{0}^{t}b(\pi(s),X^{\pi}_{\pi(s)})ds+\int_{0}^{t}\sigma(\pi(s),X^{\pi}_{\pi(s)})dW_{s},\label{SDEnum}
\end{eqnarray}
and we define a "step process" $\hat{X}^{\pi}$ as follows.
\begin{eqnarray}
\hat{X}^{\pi}_t=X^{\pi}_{\pi(t)},\;\;\;\; t\in [0,T].\label{step1}
\end{eqnarray}
The following estimate is well known (see e.g Kloeden and Platen, \cite{KP}).
\begin{proposition}
Assume $b$ and $\sigma$ satisfy the assumptions $({\bf H1})$ and $({\bf H3})$.
Then there exists a constant $C$ depending  only on $T$ and $K$, such that
\begin{eqnarray*}
\max_{1\leq i\leq n} \E\left[\sup_{0\leq t\leq T}|X^{\pi}_{t}-X_{t}|^{2}+\sup_{t_{i-1}\leq t\leq t_{i}}|X_{t}-X_{t_{i-1}}|^{2}\right]\leq C|\pi|. \label{a4}
\end{eqnarray*}
\end{proposition}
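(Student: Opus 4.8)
The plan is to treat the two quantities in \eqref{a4} separately, noting that the first term does not depend on $i$, so that the outer maximum splits as
\[
\E\sup_{0\leq t\leq T}|X^{\pi}_t-X_t|^2+\max_{1\leq i\leq n}\E\sup_{t_{i-1}\leq t\leq t_i}|X_t-X_{t_{i-1}}|^2.
\]
The second, path-regularity, piece is the easier one. On each interval I would write $X_t-X_{t_{i-1}}=\int_{t_{i-1}}^t b(s,X_s)\,ds+\int_{t_{i-1}}^t\sigma(s,X_s)\,dW_s$, take the supremum over $t\in[t_{i-1},t_i]$, bound the drift by Cauchy--Schwarz and the diffusion by the Burkholder--Davis--Gundy inequality. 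Using the linear growth $|b(s,x)|\leq K(1+|x|)$ and $\|\sigma(s,x)\|\leq K(1+|x|)$ that follows from (H1) and (H3), together with the moment bound $\E\sup_{0\leq t\leq T}|X_t|^2<\infty$ from Proposition \ref{1}, this gives $\E\sup_{t_{i-1}\leq t\leq t_i}|X_t-X_{t_{i-1}}|^2\leq C(t_i-t_{i-1})\leq C|\pi|$ with $C$ uniform in $i$.

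Before attacking the Euler error I would record the analogous a priori bounds for the scheme $X^{\pi}$. A discrete Gr\"onwall argument applied to \eqref{SDEnum} yields $\sup_{1\leq i\leq n}\E|X^{\pi}_{t_i}|^2<\infty$ uniformly in $\pi$. Moreover, since the coefficients are frozen on each interval, for $s\in[t_{i-1},t_i)$ one has explicitly $X^{\pi}_s-X^{\pi}_{\pi(s)}=b(t_{i-1},X^{\pi}_{t_{i-1}})(s-t_{i-1})+\sigma(t_{i-1},X^{\pi}_{t_{i-1}})(W_s-W_{t_{i-1}})$, whence the increment estimate $\E|X^{\pi}_s-X^{\pi}_{\pi(s)}|^2\leq C(1+\E|X^{\pi}_{t_{i-1}}|^2)|\pi|\leq C|\pi|$. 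This is exactly the control on the piecewise-constant \emph{freezing} that distinguishes the scheme from the true diffusion.

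For the convergence itself, set $e_t=X_t-X^{\pi}_t$ and subtract \eqref{SDEnum} from \eqref{FSDE}. In the drift and diffusion integrands I would insert and subtract intermediate terms,
\[
b(s,X_s)-b(\pi(s),X^{\pi}_{\pi(s)})=[b(s,X_s)-b(s,X^{\pi}_s)]+[b(s,X^{\pi}_s)-b(s,X^{\pi}_{\pi(s)})]+[b(s,X^{\pi}_{\pi(s)})-b(\pi(s),X^{\pi}_{\pi(s)})],
\]
and symmetrically for $\sigma$. The first bracket is at most $K|e_s|$ by the spatial Lipschitz bound (H1); the second is at most $K|X^{\pi}_s-X^{\pi}_{\pi(s)}|$, whose squared expectation is $\leq C|\pi|$ by the increment bound just established; the third is a pure time-freezing term. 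Taking $\sup_{0\leq t\leq u}$, applying Burkholder--Davis--Gundy to the martingale part and Cauchy--Schwarz to the drift, I would reach $\E\sup_{0\leq t\leq u}|e_t|^2\leq C\int_0^u\E|e_s|^2\,ds+C|\pi|$, after which Gr\"onwall's lemma gives $\E\sup_{0\leq t\leq T}|e_t|^2\leq C|\pi|$. Combined with the first paragraph this proves \eqref{a4}.

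The delicate point is the time-freezing bracket $b(s,\cdot)-b(\pi(s),\cdot)$ and its $\sigma$ analogue: assumptions (H1) and (H3) supply only spatial regularity, so to absorb this term into the $C|\pi|$ bound one needs a modulus of continuity of $b,\sigma$ in the time variable, namely the mild $\tfrac12$-H\"older control that is standard in the Euler-scheme literature and implicit in the reference \cite{KP}; when $b,\sigma$ are time-homogeneous it vanishes identically. Apart from this, the only genuine effort is tracking the Burkholder--Davis--Gundy and Gr\"onwall constants so that they depend on $T$ and $K$ alone and remain uniform in the partition, which is what legitimizes the single constant $C$ in \eqref{a4}.
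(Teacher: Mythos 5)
Your argument is correct and is the standard proof; the paper itself supplies no proof of this proposition, dismissing it as ``well known'' with a citation to Kloeden and Platen \cite{KP}, so there is no authorial argument to compare against beyond that reference, and what you write is essentially the argument one finds there: path regularity of $X$ via BDG and the linear growth implied by $({\bf H1})$ and $({\bf H3})$, an increment bound for the frozen-coefficient scheme, a three-term insertion in the drift and diffusion, and Gr\"onwall. Your closing caveat is not a quibble but a genuine observation about the paper's hypotheses: $({\bf H1})$ gives only spatial Lipschitz continuity and $({\bf H3})$ only boundedness at the origin, so the time-freezing bracket $b(s,\cdot)-b(\pi(s),\cdot)$ (and its $\sigma$ analogue) is not controlled by the stated assumptions, and the rate $C|\pi|$ can fail for coefficients that are merely bounded and measurable in time. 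The proposition as stated therefore tacitly assumes the $\tfrac12$-H\"older (or Lipschitz) time regularity that is standard in the Euler-scheme literature --- note that the paper does impose $|s-s'|^2$ regularity on $f$ and $g$ in $({\bf H2})$ but omits the corresponding condition on $b$ and $\sigma$ in $({\bf H1})$. Flagging this, and being explicit that the BDG and Gr\"onwall constants depend only on $T$ and $K$ and are uniform over partitions, is exactly the right level of care; nothing further is missing from your sketch.
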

Moreover, we get the following estimate involving the step process $\hat{X}^{\pi}$ due to Zhang in \cite{Zhang}.
\begin{proposition}
Assume $b$ and $\sigma$ satisfy the assumptions $({\bf H1})$ and $({\bf H3})$.
Then there exists a constant $C$ depending  only on $T$ and $K$, such that
\begin{eqnarray*}
\sup_{0\leq t\leq T}\E\left[|\hat{X}^{\pi}_{t}-X_{t}|^{2}\right]&\leq& C|\pi|;\\
\E\left[\sup_{0\leq t\leq T}|\hat{X}^{\pi}_{t}-X_{t}|^{2}\right]&\leq& C|\pi|\log\left(\frac{1}{|\pi|}\right).
\end{eqnarray*}
\end{proposition}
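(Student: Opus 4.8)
The plan is to split the error into the Euler discretization error at the grid points and the oscillation of the true diffusion inside each subinterval, writing
\[
\hat{X}^{\pi}_t - X_t = \bigl(X^{\pi}_{\pi(t)} - X_{\pi(t)}\bigr) + \bigl(X_{\pi(t)} - X_t\bigr),
\]
so that $|\hat{X}^{\pi}_t - X_t|^2 \le 2|X^{\pi}_{\pi(t)} - X_{\pi(t)}|^2 + 2|X_{\pi(t)} - X_t|^2$. The first summand is handled by the preceding Proposition (the Euler-scheme estimate): since $\pi(t)$ is always a grid point, $|X^{\pi}_{\pi(t)} - X_{\pi(t)}| \le \sup_{0\le s\le T}|X^{\pi}_s - X_s|$, whose second moment is bounded by $C|\pi|$. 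Thus the entire burden of both estimates falls on the second summand, the within-subinterval increment $X_{\pi(t)} - X_t$ of the genuine solution.

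For the first (pointwise-in-$t$) estimate I would, for each fixed $t\in[t_{i-1},t_i)$, invoke the path-regularity bound of Proposition \ref{1} with $p=2$, namely $\E[|X_t - X_{\pi(t)}|^2]\le C|t-\pi(t)|\le C|\pi|$. Combining with the grid-point bound above and taking the supremum over $t$ gives $\sup_t \E[|\hat{X}^{\pi}_t - X_t|^2]\le C|\pi|$, since both contributions are $O(|\pi|)$ uniformly in $t$.

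The second estimate is the delicate one, and the logarithmic factor is expected to be the main obstacle. After the decomposition it remains to bound
\[
\E\Bigl[\sup_{0\le t\le T}|X_{\pi(t)} - X_t|^2\Bigr] = \E\Bigl[\max_{1\le i\le n}\sup_{t_{i-1}\le t\le t_i}|X_t - X_{t_{i-1}}|^2\Bigr].
\]
Here taking the supremum over $t$ and then the maximum over the $n\approx T/|\pi|$ subintervals is what forces the $\log(1/|\pi|)$ loss. My plan is first to establish, for every even $p\ge 2$, a high-moment increment bound $\E[\sup_{t_{i-1}\le t\le t_i}|X_t - X_{t_{i-1}}|^p]\le C_p|\pi|^{p/2}$ with the dependence of $C_p$ on $p$ made explicit (essentially $C_p\sim (Kp)^{p/2}$); this follows from the SDE \eqref{FSDE}, the linear growth in (H1) and (H3), the moment bounds of Proposition \ref{1}, and the same Burkholder-Davis-Gundy computation that underlies it. Next I would pass from one interval to the maximum over intervals through
\[
\E\Bigl[\max_{1\le i\le n} A_i^2\Bigr] \le \Bigl(\sum_{i=1}^n \E[A_i^p]\Bigr)^{2/p}\le \bigl(n\,C_p|\pi|^{p/2}\bigr)^{2/p}= n^{2/p}\,C_p^{2/p}\,|\pi|,
\]
where $A_i=\sup_{t_{i-1}\le t\le t_i}|X_t - X_{t_{i-1}}|$.

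Since $n^{2/p}=\exp\bigl(\tfrac{2}{p}\log n\bigr)$ and $C_p^{2/p}\sim p$, the right-hand side behaves like $C\,p\,|\pi|\exp\bigl(\tfrac{2}{p}\log(1/|\pi|)\bigr)$. Optimizing over the free moment order $p$, the balancing choice $p\sim 2\log(1/|\pi|)$ makes the two competing factors comparable and yields the bound $C|\pi|\log(1/|\pi|)$. Feeding this back through the decomposition, together with the $O(|\pi|)$ grid-point term from the preceding Proposition, completes the proof. The crux is therefore the quantitatively explicit moment estimate combined with the $p$-optimization that manufactures the logarithm; everything else is routine bookkeeping via $|a+b|^2\le 2|a|^2+2|b|^2$ and the results already at our disposal.
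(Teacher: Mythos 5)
The paper itself gives no proof of this proposition; it is quoted as a known estimate ``due to Zhang in \cite{Zhang}'', so there is nothing internal to compare against. Your decomposition into the Euler error at the grid points plus the within-interval oscillation of $X$, your treatment of the first (pointwise-in-$t$) estimate, and your general strategy of producing the logarithm by taking high moments of the interval oscillations and optimizing $p\sim\log(1/|\pi|)$ are the right ideas and match the standard proof in outline.

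There is, however, a genuine gap in the key quantitative step. You need $\E\bigl[\sup_{t_{i-1}\le t\le t_i}|X_t-X_{t_{i-1}}|^p\bigr]\le C_p|\pi|^{p/2}$ with $C_p^{2/p}$ growing at most polynomially in $p$, and you claim $C_p\sim(Kp)^{p/2}$. The Burkholder--Davis--Gundy computation actually gives $\E\sup_t\bigl|\int_{t_{i-1}}^t\sigma(s,X_s)\,dW_s\bigr|^p\le(Cp)^{p/2}|\pi|^{p/2}\,\E\sup_s|\sigma(s,X_s)|^p$, and under $({\bf H1})$ and $({\bf H3})$ the integrand $\sigma(s,X_s)$ is only of linear growth, not bounded. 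For a linear-growth diffusion such as $dX=X\,dW$ one has $\E\sup_{t\le T}|X_t|^p\asymp e^{cp^2}$ (log-normal tails), so the honest constant is $C_p\sim(Cp)^{p/2}e^{cp^2}$, hence $C_p^{2/p}\sim p\,e^{2cp}$, and the balancing choice $p\sim 2\log(1/|\pi|)$ then produces a polynomial factor $|\pi|^{-c'}$ rather than a logarithm: the optimization collapses exactly at the step you call the crux. The standard repair is to first separate the amplitude from the oscillation --- for instance, represent $\int_{t_{i-1}}^{\cdot}\sigma\,dW$ as a time-changed Brownian motion run up to the random time $K^2|\pi|\sup_s(1+|X_s|)^2$, or use Cauchy--Schwarz to peel off the factor $\sup_s(1+|X_s|)^2$, which has finite moments of every fixed order --- and only then apply the $p$-optimization (equivalently, the sub-Gaussian maximal inequality $\E\max_{i\le n}\zeta_i^2\le C\sigma^2\log n$) to the genuinely sub-Gaussian Brownian part. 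With that modification your argument goes through; as written, the asserted $p$-dependence of the constant is unjustified and false under the stated hypotheses.
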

The backward component $(Y,Z)$ will be approximated by the following numerical scheme:
\begin{eqnarray}
&&Y_{t_n}^{\pi}= h(X^{\pi}_{T}),\; Z_{t_n}^{\pi}=0\nonumber\\\nonumber\\
&&Z^{\pi}_{t_{i-1}}=\frac{1}{\bigtriangleup_{i}^{\pi}}\E^{\pi}_{i-1}[\tilde{Y}_{t_{i}}^{\pi}\bigtriangleup^{\pi}W_{i}],\label{a5}\\\nonumber\\
&&Y^{\pi}_{t_{i-1}}=\E_{i-1}^{\pi}[\tilde{Y}_{t_{i}}^{\pi}]
+f(t_{i-1},X^{\pi}_{t_{i-1}},Y^{\pi}_{t_{i-1}},Z^{\pi}_{t_{i-1}})\bigtriangleup_{i}^{\pi},\label{a5'}
\end{eqnarray}
where $\displaystyle{\E_{i}^{\pi}[.]=\E[.|\mathcal{F}_{i}^{\pi}]}$ and $\tilde{Y}_{t_{i}}^{\pi}=Y_{t_{i}}^{\pi}+g(t_{i},X^{\pi}_{t_{i}},Y^{\pi}_{t_{i}})\bigtriangleup^{\pi}B_{i}$.
\begin{remark}
\begin{description}
\item $(i)$ Our approximation scheme differ from the one appearing in \cite{Tal}. Indeed, actually we use the conditional expectation with respect
the enlarge filtration $\sigma(X_j,\; j\leq i)\vee \mathcal{F}^{B}_{t_i}$, which is necessary to extend Itô representation theorem
for backward doubly SDE (see Pardoux and Peng, \cite{PP1}).
\item $(ii)$ The backward component and the associated control $(Y, Z)$, which solves the backward doubly SDE, can be expressed as a
function of $X$ and $B$, i.e. $(Y_t, Z_t)=(u(t,B_t, X_t), v(t,B_t, X_t))$, for some deterministic functions $u$ and $v$. Then, the conditional expectations, involved in the above discretization scheme, reduce to the regression of $\tilde{Y}^{\pi}_{t_i}$ and $\tilde{Y}^{\pi}_{t_i} (W_{t_i} - W_{t_{i-1}})$ on the random variable $(X^{\pi}_{t_{i-1}}, B_{t_{i-1}})$ .
\end{description}
\end{remark}

Next, for all $0\leq i\leq n$, on can show that $\tilde{Y}^{\pi}_{t_i}$ belongs to $L^{2}(\Omega,\mathcal{F}_{t_i})$, thus an obvious extension of It\^{o} martingale
representation theorem yields the existence of the $(\mathcal{F}_{s})_{s\in [t_{i-1},t_i)}$-jointly measurable and square integrable process $\bar{Z}^{\pi}$ satisfying
\begin{eqnarray}
\tilde{Y}^{\pi}_{t_{i}}=\E[\tilde{Y}^{\pi}_{t_{i}}|\mathcal{F}^{\pi}_{i-1}]+\int_{t_{i-1}}^{t_{i}}\bar{Z}^{\pi}_{s}dW_{s}.\label{TR}
\end{eqnarray}
Therefore we define the following continuous version
\begin{eqnarray}
Y^{\pi}_{t}&=&Y_{t_{i-1}}^{\pi}-(t-t_{i-1})f(t_{i-1},X^{\pi}_{t_{i-1}},Y^{\pi}_{t_{i-1}},Z^{\pi}_{t_{i-1}})
-g(t_i,X^{\pi}_{t_i},Y^{\pi}_{t_i})(B_{t}-B_{t_{i-1}})\nonumber\\
&&+\int^{t}_{t_{i-1}}\bar{Z}^{\pi}_s dW_s,\;\;\;\;\;\;  t_{i-1}<t\leq t_i .
\label{a6}
\end{eqnarray}
Note that the process $\bar{Z}^{\pi}$ is given by the representation theorem, thus it is useful and even necessary to find a relationship with $Z^{\pi}$
define by \eqref{a5}. We have
\begin{lemma}
\label{L4}
Assume $b,\,\sigma, f$ and $g$ satisfy the assumptions $({\bf H1}),\,({\bf H2})$ and $({\bf H3})$ and let $h\in W^{1,\infty}(\R^d)$ such that $h(0)$ is bounded by $K$.
Then for all $1\leq i\leq n$, we have
\begin{eqnarray*}
Z^{\pi}_{t_{i-1}}=\frac{1}{\bigtriangleup_{i}^{\pi}}\E_{i-1}^{\pi}\left[\int^{t_i}_{t_{i-1}}\bar{Z}^{\pi}_{s}ds\right].
\end{eqnarray*}
\end{lemma}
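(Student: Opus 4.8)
The plan is to prove the identity by inserting the martingale representation \eqref{TR} of $\tilde{Y}^{\pi}_{t_i}$ into the defining formula \eqref{a5} for $Z^{\pi}_{t_{i-1}}$ and then splitting the resulting conditional expectation into an ``$\mathcal{F}^{\pi}_{i-1}$-measurable part'' and a ``stochastic-integral part''. Concretely, from \eqref{TR} I would write
\[
\tilde{Y}^{\pi}_{t_i}\,\bigtriangleup^{\pi}W_i=\E^{\pi}_{i-1}[\tilde{Y}^{\pi}_{t_i}]\,\bigtriangleup^{\pi}W_i+\left(\int_{t_{i-1}}^{t_i}\bar{Z}^{\pi}_s\,dW_s\right)\bigtriangleup^{\pi}W_i,
\]
and apply $\E^{\pi}_{i-1}[\cdot]$ to both sides, handling the two terms separately. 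Throughout I would use the inclusion $\mathcal{F}^{\pi}_{i-1}\subset\mathcal{F}_{t_{i-1}}=\mathcal{F}^{W}_{t_{i-1}}\vee\mathcal{F}^{B}_{T}$, together with the fact (already recorded before \eqref{TR}) that $\tilde{Y}^{\pi}_{t_i}\in L^2(\Omega,\mathcal{F}_{t_i})$ and $\bar{Z}^{\pi}\in\mathcal{M}^2(\R^d)$, so that every integral and isometry below is well defined.

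For the first term, $\E^{\pi}_{i-1}[\tilde{Y}^{\pi}_{t_i}]$ is $\mathcal{F}^{\pi}_{i-1}$-measurable and therefore factors out of $\E^{\pi}_{i-1}[\cdot]$; since $W$ and $B$ are independent, the increment $\bigtriangleup^{\pi}W_i=W_{t_i}-W_{t_{i-1}}$ is independent of $\mathcal{F}^{W}_{t_{i-1}}\vee\mathcal{F}^{B}_{T}$ and centered, so $\E^{\pi}_{i-1}[\bigtriangleup^{\pi}W_i]=0$ and this term vanishes identically. For the second term, writing $\bigtriangleup^{\pi}W_i=\int_{t_{i-1}}^{t_i}dW_s$ and inserting the intermediate conditioning on $\mathcal{F}_{t_{i-1}}$ through the tower property gives
\[
\E^{\pi}_{i-1}\left[\left(\int_{t_{i-1}}^{t_i}\bar{Z}^{\pi}_s\,dW_s\right)\bigtriangleup^{\pi}W_i\right]=\E^{\pi}_{i-1}\left[\,\E\!\left[\left(\int_{t_{i-1}}^{t_i}\bar{Z}^{\pi}_s\,dW_s\right)\int_{t_{i-1}}^{t_i}dW_s\,\Big|\,\mathcal{F}_{t_{i-1}}\right]\right]=\E^{\pi}_{i-1}\left[\int_{t_{i-1}}^{t_i}\bar{Z}^{\pi}_s\,ds\right],
\]
where the inner equality is the conditional It\^{o} isometry. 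Dividing by $\bigtriangleup^{\pi}_{i}$ then yields the claimed formula.

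The main obstacle, and the step that needs genuine care in the doubly stochastic setting, is precisely this conditional It\^{o} isometry: the integrand $\bar{Z}^{\pi}$ is adapted to $\mathcal{F}_s=\mathcal{F}^{W}_s\vee\mathcal{F}^{B}_{T}$, hence depends on the \emph{entire} trajectory of $B$, including increments of $B$ after $t_{i-1}$ that are not measurable with respect to $\mathcal{F}^{\pi}_{i-1}$. One therefore cannot apply an isometry directly at the level of $\mathcal{F}^{\pi}_{i-1}$. The resolution I would use is to condition first on $\mathcal{F}^{B}_{T}$: by independence of $W$ and $B$, the process $W$ remains a Brownian motion and $\int_{t_{i-1}}^{\cdot}\bar{Z}^{\pi}_s\,dW_s$ becomes an ordinary It\^{o} integral against it, so the classical $L^2$-isometry $\E[(\int\bar{Z}^{\pi}_s\,dW_s)(W_{t_i}-W_{t_{i-1}})\mid\mathcal{F}_{t_{i-1}}]=\E[\int\bar{Z}^{\pi}_s\,ds\mid\mathcal{F}_{t_{i-1}}]$ is valid at the level of $\mathcal{F}_{t_{i-1}}=\mathcal{F}^{W}_{t_{i-1}}\vee\mathcal{F}^{B}_{T}$. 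Projecting this down onto $\mathcal{F}^{\pi}_{i-1}$ via the tower property, as displayed above, then completes the proof. This is exactly the ``obvious extension'' of It\^{o}'s representation for backward doubly SDEs alluded to in the text, and making the conditioning-on-$\mathcal{F}^{B}_{T}$ argument explicit is the only nontrivial point.
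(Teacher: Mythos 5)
Your proof is correct and follows essentially the same route as the paper's: substitute the representation \eqref{TR} into the definition \eqref{a5}, observe that the $\mathcal{F}^{\pi}_{i-1}$-measurable term is killed because $\bigtriangleup^{\pi}W_i$ is centered and independent of $\mathcal{F}^{W}_{t_{i-1}}\vee\mathcal{F}^{B}_{T}$, and conclude by It\^{o}'s isometry. The paper compresses the last step into the single phrase ``the result follows by It\^{o}'s isometry,'' whereas you correctly identify and justify the only delicate point, namely that the isometry must be applied conditionally on $\mathcal{F}_{t_{i-1}}=\mathcal{F}^{W}_{t_{i-1}}\vee\mathcal{F}^{B}_{T}$ (where $W$ is still a Brownian motion by independence of $W$ and $B$) and then projected onto $\mathcal{F}^{\pi}_{i-1}$ by the tower property.
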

\begin{proof}
Let recall
\begin{eqnarray*}
\bigtriangleup^{\pi}_{i}Z^{\pi}_{t_{i-1}}&=&\frac{1}{\bigtriangleup_{i}^{\pi}}\E^{\pi}_{i-1}[\left(Y_{t_{i}}^{\pi}
+g(t_i,X_{t_i}^{\pi},Y_{t_{i}}^{\pi})\bigtriangleup^{\pi}
B_i\right)\bigtriangleup^{\pi}W_{i}].
\end{eqnarray*}
Then it follows from $(\ref{TR})$ that
\begin{eqnarray*}
Z^{\pi}_{t_{i-1}}=\frac{1}{\bigtriangleup_{i}^{\pi}}\E^{\pi}_{i-1}\left[\bigtriangleup^{\pi}W_{i}\int^{t_i}_{t_{i-1}}\bar{Z}^{\pi}_s dW_s\right].
\end{eqnarray*}
The result follows by It\^{o}'s isometry.
\end{proof}

We also need the following which is the particular case of Theorem 3.2.
\begin{lemma}
\label{L5}
Assume $b,\,\sigma, f$ and $g$ satisfy the assumptions $({\bf H1}),\,({\bf H2})$ and $({\bf H3})$ and let $h\in W^{1,\infty}(\R^d)$ such that $h(0)$ is bounded by $K$.\newline
Let define, for each $1\leq i\leq n$,
\begin{eqnarray*}
\tilde{Z}^{\pi}_{t_{i-1}}&=&\frac{1}{\bigtriangleup_{i}^{\pi}}\E_{i-1}^{\pi}\left[\int^{t_i}_{t_{i-1}}Z_{s}ds\right].
\label{a'6}
\end{eqnarray*}
Then there exists a constant $C$ depending  only on $T$ and $K$, such that
\begin{eqnarray}
\E\left[\max_{1\leq i\leq n}\sup_{t_{i-1}\leq t\leq t_i}|Y_{t}-Y_{t_{i-1}}|^{2}
+\sum_{i=1}^{n}\int^{t_i}_{t_{i-1}}|Z_{s}
-\tilde{Z}^{\pi}_{t_{i-1}}|^{2}ds\right]\leq C|\pi|.\label{a"6}
\end{eqnarray}
\end{lemma}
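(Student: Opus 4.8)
The plan is to read Lemma \ref{L5} as the Markovian instance of Theorem \ref{L05}. Define $\Phi:\D\rightarrow\R$ by $\Phi(x)=h(x(T))$. Because $h\in W^{1,\infty}(\R^d)$, for any $x_1,x_2\in\D$ one has $|\Phi(x_1)-\Phi(x_2)|=|h(x_1(T))-h(x_2(T))|\leq K|x_1(T)-x_2(T)|\leq K\sup_{0\leq t\leq T}|x_1(t)-x_2(t)|$, so $({\bf H4})$ holds with the very same constant $K$; moreover $\Phi({\bf 0})=h(0)$ is bounded by $K$, so $({\bf H5})$ holds as well. Hence the terminal value $\xi=h(X_T)=\Phi(X)$ meets all the hypotheses $({\bf H1})$--$({\bf H5})$, and I would invoke Theorem \ref{L05} with the partition $\pi$ playing the role of $\pi_0$, so that the target estimate \eqref{a"6} becomes the estimate \eqref{a"61} specialised to this terminal condition.

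The $Y$-part of \eqref{a"6} then needs no new argument, since it never refers to the conditioning $\sigma$-algebra. Exactly as in the opening lines of the proof of Theorem \ref{L05}, Proposition \ref{L2} yields $\E(|Y_t-Y_{t_{i-1}}|^2)\leq C|\pi|$ for $t\in[t_{i-1},t_i)$, and the Burkholder--Davis--Gundy inequality promotes this pointwise bound to the control of $\E[\max_{i}\sup_{t}|Y_t-Y_{t_{i-1}}|^2]$; I would simply transcribe \eqref{EC}.

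The only genuinely new point — and the main obstacle — is that the $Z$-term of Lemma \ref{L5} conditions on the coarse discrete filtration $\mathcal{F}^\pi_{i-1}=\sigma(X_{t_j},\,j\leq i-1)\vee\mathcal{F}^B_{t_{i-1}}$, whereas applying Theorem \ref{L05} to $\pi$ conditions on $\mathcal{F}_{t_{i-1}}=\mathcal{F}^W_{t_{i-1}}\vee\mathcal{F}^B_T$, and one only has the inclusion $\mathcal{F}^\pi_{i-1}\subseteq\mathcal{F}_{t_{i-1}}$. Because projection onto a \emph{smaller} $\sigma$-algebra can only enlarge the residual, the estimate for $\tilde Z$ in Theorem \ref{L05} does not transfer by monotonicity; the correct tool is instead the $L^2$-minimality already used there (Lemma 3.4.2 of \cite{Z}). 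Since $\tilde Z^\pi_{t_{i-1}}$ is, by definition, the best $\mathcal{F}^\pi_{i-1}$-measurable $L^2$-approximation of $Z$ on $[t_{i-1},t_i)$, it suffices to exhibit \emph{one} $\mathcal{F}^\pi_{i-1}$-measurable comparison $\zeta_{i-1}$ with $\sum_i\E\int_{t_{i-1}}^{t_i}|Z_s-\zeta_{i-1}|^2\,ds\leq C|\pi|$.

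I would take $\zeta_{i-1}=\E[Z^\pi_{t_{i-1}}\,|\,\mathcal{F}^\pi_{i-1}]$, with $Z^\pi$ the control of the smooth-coefficient BDSDE, and split $|Z_s-\zeta_{i-1}|^2$ into the approximation error $|Z_s-Z^\pi_s|^2$, the path-regularity $|Z^\pi_s-Z^\pi_{t_{i-1}}|^2$ of the smooth control, and the coarsening residual $|Z^\pi_{t_{i-1}}-\E[Z^\pi_{t_{i-1}}|\mathcal{F}^\pi_{i-1}]|^2$. The first two are bounded by $C|\pi|$ exactly as in Step 1 of the proof of Theorem \ref{L05}, using the uniform moment and modulus-of-continuity estimates \eqref{SE} for $N,M$ and $\nabla X$, with the mollification of Step 2 to descend to merely Lipschitz coefficients. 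The crux is the third term: one must show that the information lost in passing from $\mathcal{F}_{t_{i-1}}$ to $\mathcal{F}^\pi_{i-1}$ costs at most $O(|\pi|)$ in $L^2$, which is where the Markovian regression structure $(Y_t,Z_t)=(u(t,B_t,X_t),v(t,B_t,X_t))$ recorded in the remark following the scheme, together with the step-process estimates of Section 4, must be brought in. This coarsening estimate is the real content of the lemma and the step I expect to require the most care.
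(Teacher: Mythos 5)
Your reduction is exactly the paper's: the paper gives no separate proof of Lemma \ref{L5} at all, declaring it ``the particular case of Theorem 3.2'', i.e.\ Theorem \ref{L05} applied to $\Phi(x)=h(x(T))$, and your verification of $({\bf H4})$--$({\bf H5})$ for this $\Phi$ together with the transcription of \eqref{EC} for the $Y$-part is precisely that reduction made explicit. Where you go beyond the paper is in noticing that the conditioning $\sigma$-algebras do not match: Theorem \ref{L05} conditions on $\mathcal{F}_{t_{i-1}}=\mathcal{F}^W_{t_{i-1}}\vee\mathcal{F}^B_T$, Lemma \ref{L5} on the smaller $\mathcal{F}^\pi_{i-1}=\sigma(X_{t_j},\,j\leq i-1)\vee\mathcal{F}^B_{t_{i-1}}$, and the $L^2$-minimality of Lemma 3.4.2 of \cite{Z} gives monotonicity in the wrong direction. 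You are right that this is the crux, and the paper passes over it in silence.

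The problem is that your proposal names this step but does not carry it out, and the route you sketch cannot succeed with $\mathcal{F}^\pi_{i-1}$ as defined in Section 4. The control $Z$ of a BDSDE is $\mathcal{F}^W_s\vee\mathcal{F}^B_{s,T}$-measurable: through the $g$-integral it depends on the \emph{future} increments of $B$, of which $\mathcal{F}^\pi_{i-1}$ (which contains only $\mathcal{F}^B_{t_{i-1}}$) records nothing, so conditioning on $\mathcal{F}^\pi_{i-1}$ wipes out the entire $g$-contribution; the same applies to your competitor $\E[Z^\pi_{t_{i-1}}\mid\mathcal{F}^\pi_{i-1}]$. Concretely, take $d=\ell=1$, $b=f=0$, $\sigma=1$, $h(x)=x$, $g(s,x,y)=x$: then $Y_t=X_t(1+B_T-B_t)$ and $Z_s=1+B_T-B_s$, so $\tilde Z^\pi_{t_{i-1}}=1$ and $\sum_{i}\E\int_{t_{i-1}}^{t_i}|Z_s-\tilde Z^\pi_{t_{i-1}}|^2\,ds=\int_0^T(T-s)\,ds=T^2/2$, which is not $O(|\pi|)$, although all of $({\bf H1})$--$({\bf H3})$ and the conditions on $h$ hold. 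Thus no coarsening estimate of the kind you hope for exists; the only way to make \eqref{a"6} a genuine corollary of \eqref{a"61} is to read $\E^\pi_{i-1}$ as conditioning on a $\sigma$-algebra containing $\mathcal{F}^B_{t_{i-1},T}$ (or all of $\mathcal{F}^B_T$), consistently with the $\mathcal{F}_{t_{i-1}}$ actually used in the Girsanov and representation arguments of Section 3. Your instinct to isolate this step was correct; the step is where both your argument and the paper's one-line justification break down, and it calls for a correction of the statement rather than a further estimate.
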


We are now ready to state our main result of this section, which provides the rate of convergence of the
numerical scheme \eqref{a5}-\eqref{a5'}.
\begin{theorem}
\label{T1}
Assume $b,\,\sigma, f$ and $g$ satisfy the assumptions $({\bf H1}),\,({\bf H2})$ and $({\bf H3})$ and let $h\in W^{1,\infty}(\R^d)$ such that $h(0)$ is bounded by $K$.
Then there exists a constant $C$ depending  only on $T$ and $K$, such that
\begin{eqnarray*}
\sup_{0\leq t\leq T}\E|Y_{t}-Y_{t}^{\pi}|^{2}+\E\left[\int^{T}_{0}|Z_{s}-\hat{Z}^{\pi}_{s}|^{2}ds\right]\leq C|\pi|.
\end{eqnarray*}
\end{theorem}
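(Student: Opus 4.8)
The plan is to run a backward induction on the grid, comparing the exact solution with the continuous version \eqref{a6} of the scheme on each subinterval $[t_{i-1},t_i]$ and then summing the one--step errors; throughout, $\hat Z^{\pi}$ denotes the step process $\hat Z^{\pi}_s=Z^{\pi}_{t_{i-1}}$ for $s\in[t_{i-1},t_i)$. Writing $\delta Y_{t_i}=Y_{t_i}-Y^{\pi}_{t_i}$, I would first subtract \eqref{a6} (evaluated at $t=t_i$) from the exact dynamics \eqref{FBSDE} on $[t_{i-1},t_i]$ to obtain the one--step identity
\begin{eqnarray*}
\delta Y_{t_{i-1}}&=&\delta Y_{t_i}+\int_{t_{i-1}}^{t_i}\big[f(s,X_s,Y_s,Z_s)-f(t_{i-1},X^{\pi}_{t_{i-1}},Y^{\pi}_{t_{i-1}},Z^{\pi}_{t_{i-1}})\big]\,ds\\
&&+\Big[\int_{t_{i-1}}^{t_i}g(s,X_s,Y_s)\overleftarrow{dB_s}-g(t_i,X^{\pi}_{t_i},Y^{\pi}_{t_i})\Delta^{\pi}B_i\Big]-\int_{t_{i-1}}^{t_i}(Z_s-\bar Z^{\pi}_s)\,dW_s.
\end{eqnarray*}
Since $W$ and $B$ are independent and $Z,\bar Z^{\pi}$ are adapted, conditioning by $\mathcal{F}^{\pi}_{i-1}$ annihilates the forward $dW$--martingale and yields a recursion for $\E|\delta Y_{t_{i-1}}|^{2}$, while squaring the identity and applying the It\^o isometry to the $dW$--term produces the companion bound for $\E\int_{t_{i-1}}^{t_i}|Z_s-\bar Z^{\pi}_s|^{2}ds$.

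To control the driver increment I would split it by the Lipschitz assumption $({\bf H2})(i)$: the time and $X$ contributions are of order $|\pi|$ by the path regularity of $X$ and the Euler estimates (Propositions 4.1 and 4.2); the $Y$ contribution is bounded by $|Y_s-Y_{t_{i-1}}|+|\delta Y_{t_{i-1}}|$, whose first piece is handled by the $L^{2}$--regularity of $Y$ in Lemma \ref{L5}; and the delicate $Z$ contribution is treated via $|Z_s-Z^{\pi}_{t_{i-1}}|\le|Z_s-\tilde Z^{\pi}_{t_{i-1}}|+|\tilde Z^{\pi}_{t_{i-1}}-Z^{\pi}_{t_{i-1}}|$, the first term being absorbed by Lemma \ref{L5} and the second linked to $\E\int|Z-\bar Z^{\pi}|^2$ through Lemma \ref{L4}. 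A Young inequality with a small parameter then absorbs the $Z$--error generated by the driver into the companion bound. For the backward integral I would exploit the backward It\^o isometry together with the Lipschitz regularity $({\bf H2})(ii)$ of $g$ and the path regularity of $(X,Y)$ to show that $g(t_i,X^{\pi}_{t_i},Y^{\pi}_{t_i})\Delta^{\pi}B_i$ differs from $\int_{t_{i-1}}^{t_i}g\,\overleftarrow{dB_s}$ by a term whose squared expectation is of order $\Delta^{\pi}_i(|\pi|+\E|\delta Y_{t_i}|^2)$.

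Assembling these estimates gives a recursion of the form $\E|\delta Y_{t_{i-1}}|^2+c\,\E\int_{t_{i-1}}^{t_i}|Z_s-\bar Z^{\pi}_s|^2ds\le(1+C\Delta^{\pi}_i)\,\E|\delta Y_{t_i}|^2+C\Delta^{\pi}_i\,r_i$, where $\sum_i\Delta^{\pi}_i r_i\le C|\pi|$ by Lemma \ref{L5} and the Euler estimates. Summing from $i=1$ to $n$, using $\delta Y_{t_n}=h(X_T)-h(X^{\pi}_T)$, which is of order $|\pi|$ in squared $L^2$ by the Lipschitz property of $h$ and Proposition 4.1, and applying the discrete Gronwall lemma to the $(1+C\Delta^{\pi}_i)$ factors, I obtain $\max_i\E|\delta Y_{t_i}|^2+\E\sum_i\int_{t_{i-1}}^{t_i}|Z_s-\bar Z^{\pi}_s|^2ds\le C|\pi|$; reinserting the one--step error over each subinterval upgrades this to $\sup_t\E|Y_t-Y^{\pi}_t|^2\le C|\pi|$. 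Finally, to pass from $\bar Z^{\pi}$ to $\hat Z^{\pi}$, I write $|Z_s-\hat Z^{\pi}_s|\le|Z_s-\tilde Z^{\pi}_{t_{i-1}}|+|\tilde Z^{\pi}_{t_{i-1}}-Z^{\pi}_{t_{i-1}}|$ on $[t_{i-1},t_i)$: the first term contributes $C|\pi|$ by Lemma \ref{L5}, and by Lemma \ref{L4} with Jensen's inequality $\E\sum_i\int_{t_{i-1}}^{t_i}|\tilde Z^{\pi}_{t_{i-1}}-Z^{\pi}_{t_{i-1}}|^2ds\le\E\sum_i\int_{t_{i-1}}^{t_i}|Z_s-\bar Z^{\pi}_s|^2ds\le C|\pi|$, which yields the claimed bound on $\E\int_0^T|Z_s-\hat Z^{\pi}_s|^2ds$.

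The main obstacle I anticipate is the backward stochastic integral term. Unlike the classical BSDE case, the error recursion couples the forward martingale $\int(Z-\bar Z^{\pi})dW$ with the backward one $\int g\,\overleftarrow{dB}$, so I must argue carefully --- using the independence of $B$ and $W$, the backward--martingale structure of $\int g\,\overleftarrow{dB}$, and the mixed filtration $\mathcal{F}^{\pi}_{i-1}=\sigma(X_{t_j},j\le i-1)\vee\mathcal{F}^{B}_{t_{i-1}}$ --- that the cross terms either vanish under $\E^{\pi}_{i-1}$ or stay of order $\Delta^{\pi}_i$, and that the discretization error of the $g$--term remains at order $\Delta^{\pi}_i$. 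The secondary delicate point is preventing the implicit term $f(t_{i-1},\cdot,Y^{\pi}_{t_{i-1}},Z^{\pi}_{t_{i-1}})$ and the driver's $Z$--dependence from corrupting the Gronwall constant, which is precisely what forces the small--parameter Young splitting together with the $L^{2}$--regularity of $Z$ supplied by Lemma \ref{L5}.
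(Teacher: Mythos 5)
Your proposal is correct and follows essentially the same route as the paper: a one-step error recursion on each subinterval for $\E|\delta^{\pi}Y|^{2}+\E\int|\delta^{\pi}Z|^{2}ds$, controlled by the Lipschitz assumptions, the Euler estimates, the $L^{2}$-regularity of Lemma \ref{L5}, the identity of Lemma \ref{L4}, a Young inequality with a large parameter $\beta$, and a discrete Gronwall argument, followed by the same triangle inequality through $\tilde Z^{\pi}$ to pass to $\hat Z^{\pi}$. The only cosmetic difference is that the paper obtains the recursion by applying It\^{o}'s formula to $|\delta^{\pi}Y_{t}|^{2}$ (which produces the $\int|\delta^{\pi}g|^{2}ds$ term directly as the quadratic variation of the backward integral), whereas you subtract, condition, and square; both lead to the same estimate \eqref{T3.2.1}.
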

\begin{proof}
The proof follows the step of proof of Theorem 3.1 in \cite{Tal} so that we will only outline. In the sequel, $C>0$ will denote the generic constant
independent of $i$ and $n$ and may vary line to line. For $i\in\{0,...,n-1\}$, we set
\begin{eqnarray*}
\delta^{\pi}Y_{t}&=&Y_{t}-Y_{t}^{\pi},\;\; \delta^{\pi}Z_{t}=Z_{t}-\bar{Z}_{t}^{\pi},\;\;\delta^{\pi}f(t)=f(t,X_{t},Y_{t},Z_{t})-f(t_{i},X_{t_{i}}^{\pi},Y_{t_{i}}^{\pi},Z_{t_{i}}^{\pi})\\
&&\mbox{and}\;\; \delta^{\pi}g(t)=g(t,X_{t},Y_{t},)-g(t_{i+1},X_{t_{i+1}}^{\pi},Y_{t_{i+1}}^{\pi}),\;\;\; t\in[t_{i},t_{i+1}).
\end{eqnarray*}
By It\^{o}'s formula, it follows from Lipschitz condition on $f,\ g$ and $h$, together with the
inequality $ab\leq \beta a^{2}+b^{2}/\beta$ that
\begin{eqnarray}
V_t&=&\E|\delta^{\pi}Y_{t}|^{2}+\E\int^{t_{i+1}}_{t}|\delta^{\pi}Z_{s}|^{2}ds-|\delta^{\pi}Y_{t_{i+1}}|^{2}\nonumber\\
&=&2\E\int^{t_{i+1}}_{t}\langle
\delta^{\pi}Y_{s},\delta^{\pi}f(s)\rangle ds +\int^{t_{i+1}}_{t}|\delta^{\pi}g(s)|^{2}ds\nonumber\\
&\leq& \frac{C}{\beta}\int_{t}^{t_{i+1}}\E\left\{|\pi|^{2}+|X_{s}-X^{\pi}_{t_{i}}|^{2}
+|Y_{s}-Y^{\pi}_{t_{i}}|^{2}+|Z_{s}-Z^{\pi}_{t_{i}}|^{2}\right\}ds\nonumber\\
&&+\int_{t}^{t_{i+1}}C\E\left\{|\pi|^{2}+|X_{s}-X^{\pi}_{t_{i+1}}|^{2}
+|Y_{s}-Y^{\pi}_{t_{i+1}}|^{2}\right\}ds\nonumber\\
&&+\beta\int_{t}^{t_{i+1}}\E|\delta^{\pi}Y_{s}|^{2}ds, \;\; t\in[t_{i},t_{i+1}).
\label{T3.2.1}
\end{eqnarray}

Proposition 3.1, Lemma 3.3 and Lemma 3.4 yield that
\begin{eqnarray}
\begin{array}{l}
\E|X_{s}-X^{\pi}_{t_{i}}|^{2}+\E|X_{s}-X^{\pi}_{t_{i+1}}|^{2}\leq C|\pi|,\\\\
\E|Y_{s}-Y^{\pi}_{t_{i}}|^{2}\leq 2\left(\E|Y_{s}-Y_{t_{i}}|^{2}
+\E|\delta^{\pi}Y_{t_{i}}|^{2}\right)\leq C\left(|\pi|+\E|\delta^{\pi}Y_{t_{i}}|^{2}\right)\\\\
\E|Y_{s}-Y^{\pi}_{t_{i+1}}|^{2}\leq 2\left(\E|Y_{s}-Y_{t_{i+1}}|^{2}+\E|\delta^{\pi}Y_{t_{i+1}}|^{2}\right)\leq C\left(|\pi|+\E|\delta^{\pi}Y_{t_{i+1}}|^{2}\right)\\\\
\E|Z_{s}-Z^{\pi}_{t_{i}}|^{2}\leq 2\left(\E|Z_{s}-\tilde{Z}^{\pi}_{t_{i}}|^{2}+\frac{1}{\Delta_{i+1}^{\pi}}\int_{t_{i}}^{t_{i+1}}
\E|\delta^{\pi}Z_{r}|^{2}dr\right)
\end{array}
\label{T3.2.3}
\end{eqnarray}

Plugging $(\ref{T3.2.3})$ into $(\ref{T3.2.1})$, we get
\begin{eqnarray*}
V_{t}&\leq& \frac{C}{\beta}\int_{t}^{t_{i+1}}\E\left\{|\pi|
+|\delta^{\pi}Y_{t_i}|^{2}+|Z_{s}-\tilde{Z}^{\pi}_{t_{i}}|^{2}\right\}ds\nonumber\\
&&+C\int_{t}^{t_{i+1}}\E\left\{|\pi|
+|\delta^{\pi}Y_{t_{i+1}}|^{2}\right\}ds\nonumber\\
&&+\frac{C}{\beta}\int_{t}^{t_{i+1}}\E|\delta^{\pi}Z_{s}|^{2}ds
+\beta\int_{t}^{t_{i+1}}\E|\delta^{\pi}Y_{s}|^{2}ds,
\label{T3.2.5}
\end{eqnarray*}
from which and the definition of $V_{t}$ provide, for $t_{i}\leq t\leq t_{i+1},$
\begin{eqnarray}
\E|\delta^{\pi}Y_{t}|^{2}+\int_{t}^{t_{i+1}}\E|\delta^{\pi}Z_{s}|^{2}ds\leq\beta\int_{t}^{t_{i+1}}\E|\delta^{\pi}Y_{s}|^{2}ds+A_{i}
\label{T3.2.6}
\end{eqnarray}
where
\begin{eqnarray*}
A_{i}&=&(1+C\pi)\E|\delta^{\pi}Y_{t_{i+1}}|^{2}+\frac{C}{\beta}\left[|\pi|^{2}+|\pi|\E|Y^{\pi}_{t_i}|+\int_{t_{i}}^{t_{i+1}}\E|Z_{s}-\tilde{Z}^{\pi}_{t_{i}}|^{2}ds\right]\\
&&+\frac{C}{\beta}\int_{t_{i}}^{t_{i+1}}\E|\delta^{\pi}Z_{s}|^{2}ds.
\end{eqnarray*}
Next, by little calculus used Gronwall's Lemma, we have
\begin{eqnarray}
\E|\delta^{\pi}Y_{t}|^{2}+\int_{t}^{t_{i+1}}\E|\delta^{\pi}Z_{s}|^{2}ds \leq \left(1+C\beta|\pi|\right)A_{i};
\label{gronw}
\end{eqnarray}
 hence for $t=t_i$ and $\beta$ sufficiently large than $C$, such that $\frac{C}{\beta}<1$, we obtain
\begin{eqnarray*}
&&\E|\delta^{\pi}Y_{t_i}|^{2}+(1-\frac{C}{\beta})\int_{t_i}^{t_{i+1}}\E|\delta^{\pi}Z_{s}|^{2}ds\nonumber\\
&\leq& (1+C|\pi|)\left\{\E|\delta^{\pi}Y_{t_{i+1}}|^{2}+|\pi|^{2}+
\int_{t_{i}}^{t_{i+1}}\E[|Z_{s}-\tilde{Z}^{\pi}_{t_{i}}|^{2}]ds\right\}
\end{eqnarray*}
for small $|\pi|$.

Iterating the last inequality, we get
\begin{eqnarray*}
&&\E|\delta^{\pi}Y_{t_i}|^{2}+(1-\frac{C}{\beta})\int_{t_i}^{t_{i+1}}\E|\delta^{\pi}Z_{s}|^{2}ds\\
&\leq& (1+C|\pi|)^{T/|\pi|}\left\{\E|\delta^{\pi}Y_{T}|^{2}+|\pi|
+\sum_{i=1}^{n}\int_{t_{i-1}}^{t_{i}}\E[|Z_{s}-\tilde{Z}^{\pi}_{t_{i-1}}|^{2}]ds\right\}.
\end{eqnarray*}
Moreover, it follows from Lemma 3.4, Lipschitz condition on $g$ and Proposition 3.1 that
\begin{eqnarray}
&&\E|\delta^{\pi}Y_{t_i}|^{2}+(1-\frac{C}{\beta})\int_{t_i}^{t_{i+1}}\E|\delta^{\pi}Z_{s}|^{2}ds\nonumber\\
&\leq& (1+C|\pi|)^{T/\pi}\left\{\E|\delta^{\pi}Y_{T}|^{2}+|\pi|+C|\pi|\right\}\leq C|\pi| \label{est1}
\end{eqnarray}
for small $|\pi|$.

On the other hand summing up inequality \eqref{gronw} with $t=t_i$, we get
\begin{eqnarray*}
&&\left[1-\frac{C}{\beta}(1+C\beta|\pi|)\right]\int_0^T\E|\delta^{\pi} Z_s|^2 ds\\
&\leq& (1+C\beta|\pi|)\frac{C}{\beta}|\pi|+(1+C\beta|\pi|)(1+C|\pi|)\E|\delta^{\pi}Y_T|^2\\
&&+\left[(1+C\beta|\pi|)\frac{C}{\beta}|\pi|-1\right]\E|\delta^{\pi}Y_0|^2\\
&&+\left[(1+C\beta|\pi|)((1+C|\pi|)+\frac{C}{\beta}|\pi|)-1\right]\sum_{i=1}^{n-1}\E|\delta^{\pi}Y_{t_{i}}|^{2}\\
&&+(1+C\beta|\pi|)\frac{C}{\beta}\sum_{i=0}^{n-1}\int_{t_i}^{t_{i+1}}\E|Z_s-\tilde{Z}_{t_i}^{\pi}|^{2}ds.
\end{eqnarray*}
Therefore, by inequality \eqref{est1} and Lemma 3.4 one derives that
\begin{eqnarray*}
\int_0^T\E|\delta^{\pi} Z_s|^2 ds\leq C|\pi|
\end{eqnarray*}
and then
\begin{eqnarray*}
\sup_{0\leq t\leq T}|\delta^{\pi}Y_{t}|^{2}\leq C|\pi|.
\end{eqnarray*}
\end{proof}
To end this section, let give the following bound on $Y^{\pi}_{y_i}$'s which will be used in the approximating of discrete conditional expectation $\E^{\pi}_i$, for all $0\leq i\leq n-1$.
\begin{lemma}
Assume $b,\,\sigma, f$ and $g$ satisfy the assumptions $({\bf H1}),\,({\bf H2})$ and $({\bf H3})$ and let $h\in W^{1,\infty}(\R^d)$ such that $h(0)$ is bounded by $K$. For all $0\leq i\leq n-1$, define the sequences of random variables by backward induction
\begin{eqnarray*}
\alpha^{\pi}_n&=&2C,\;\;\; \beta_n=C,\\
\alpha_i^{\pi}&=&(1-C|\pi|)^{-1}(1+C^2|\pi|)^{1/2}\left\{(1+2C|\pi|)[(1+C|\bigtriangleup^{\pi}B_{i+1}|)
\beta_{i+1}^{\pi}+C|\bigtriangleup^{\pi}B_{i+1}|]+C|\pi|\right\} \\
\beta^{\pi}_i&=&(1-C|\pi|)^{-1}(1+C^2|\pi|)^{1/2}\left\{(1+C|\bigtriangleup^{\pi}B_{i+1}|)\alpha^{\pi}_{i+1}+6C^2|\pi|
(1+2C|\bigtriangleup^{\pi}B_{i+1}|)+3C|\pi|\right\}                ,
\end{eqnarray*}
Then, for all $0\leq i\leq n$;
\begin{eqnarray}
|Y^{\pi}_{t_{i}}|&\leq& \alpha_i^{\pi}+\beta_i^{\pi}|X^{\pi}_{t_{i}}|^2,\label{bound1}\
\end{eqnarray}
\begin{eqnarray}
&&\E^{\pi}_{i-1}|Y^{\pi}_{t_{i}}+g(t_{i},X^{\pi}_{t_{i}},Y^{\pi}_{t_{i}})\bigtriangleup^{\pi}B_{i}|\nonumber\\
&\leq &(\E^{\pi}_{i-1}|Y^{\pi}_{t_{i}}
+g(t_{i},X^{\pi}_{t_{i}},Y^{\pi}_{t_{i}})\bigtriangleup^{\pi}B_{i}|^{2})^{1/2}\nonumber\\
&\leq& (1+2C|\pi|)\left\{(1+C|\bigtriangleup^{\pi}B_{i+1}|)\beta_{i+1}^{\pi}+C|\bigtriangleup^{\pi}B_{i+1}|\right\}
|X^{\pi}_{t_{i}}|^{2}\nonumber\\
&&+(1+C|\bigtriangleup^{\pi}B_{i+1}|)\alpha^{\pi}_{i+1}+6C^2|\pi|(1+2C|\bigtriangleup^{\pi}B_{i+1}|)label{bound2}
\end{eqnarray}
\begin{eqnarray}
&&\E^{\pi}_{i-1}|(Y^{\pi}_{t_{i}}+g(t_{i},X^{\pi}_{t_{i}},Y^{\pi}_{t_{i}})\bigtriangleup^{\pi}B_{i})
\bigtriangleup^{\pi}W_{i+1}|\nonumber\\
&\leq&\sqrt{|\pi|}(1+2C|\pi|)\left\{(1+C|\bigtriangleup^{\pi}B_{i+1}|)\beta_{i+1}^{\pi}+C|\bigtriangleup^{\pi}
B_{i+1}|\right\}|X^{\pi}_{t_{i}}|^{2}\nonumber\\
&&+\sqrt{|\pi|}(1+C|\bigtriangleup^{\pi}B_{i+1}|)\alpha^{\pi}_{i+1}+6C^2|\pi|(1+2C|\bigtriangleup^{\pi}B_{i+1}|).
\label{bounded 3}
\end{eqnarray}
Moreover,
\begin{eqnarray*}
\limsup_{|\pi|\rightarrow 0}\max_{0\leq i\leq n}(\alpha^{\pi}_i+\beta^{\pi}_i)<\infty, \; a.s.
\end{eqnarray*}
\end{lemma}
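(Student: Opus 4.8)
The plan is to establish the three pointwise estimates (the bound \eqref{bound1} together with the two subsequent displayed inequalities) simultaneously by backward induction on $i$, from $i=n$ down to $i=0$, and then to analyse the resulting coupled recursion to obtain the almost-sure statement. For the base case $i=n$ I use $Y^{\pi}_{t_n}=h(X^{\pi}_T)$: since $h\in W^{1,\infty}(\R^d)$ with $|h(0)|\le K$, linear growth gives $|h(x)|\le K+K|x|\le \tfrac{3K}{2}+\tfrac{K}{2}|x|^2$, so \eqref{bound1} holds at $i=n$ as soon as the generic constant $C$ is taken with $2C\ge \tfrac{3K}{2}$ and $C\ge\tfrac{K}{2}$, matching $\alpha^{\pi}_n=2C,\ \beta_n=C$.

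For the inductive step I assume \eqref{bound1} at level $i+1$. First I record the one-step Euler moment estimate
\[ \E^{\pi}_{i}|X^{\pi}_{t_{i+1}}|^2\le (1+C|\pi|)|X^{\pi}_{t_i}|^2+C|\pi|, \]
obtained by squaring the Euler recursion from \eqref{SDEnum}, using $\E^{\pi}_{i}[\bigtriangleup^{\pi}W_{i+1}]=0$, $\E^{\pi}_{i}|\bigtriangleup^{\pi}W_{i+1}|^2=d\,\bigtriangleup^{\pi}_{i+1}$ and the linear growth of $b,\sigma$ from $({\bf H1}),({\bf H3})$. Writing $\tilde Y^{\pi}_{t_{i+1}}=Y^{\pi}_{t_{i+1}}+g(t_{i+1},X^{\pi}_{t_{i+1}},Y^{\pi}_{t_{i+1}})\bigtriangleup^{\pi}B_{i+1}$, I bound $|g|\le K+K|X^{\pi}_{t_{i+1}}|+K|Y^{\pi}_{t_{i+1}}|$ by $({\bf H2})(ii),({\bf H3})$, insert the inductive bound on $|Y^{\pi}_{t_{i+1}}|$, take $\E^{\pi}_{i}$ and apply the moment estimate above; conditional Cauchy--Schwarz turns $\E^{\pi}_{i}[|g|\,|\bigtriangleup^{\pi}B_{i+1}|]$ into $(\E^{\pi}_{i}|g|^2)^{1/2}(\ell\,\bigtriangleup^{\pi}_{i+1})^{1/2}$ and provides the passage from the $L^1$ to the $L^2$ quantity. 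Collecting the coefficients of $|X^{\pi}_{t_i}|^2$ and of the constant term reproduces the bracketed expressions in the second displayed estimate, while the factor $\sqrt{|\pi|}$ in the third one comes from $Z^{\pi}_{t_i}=(\bigtriangleup^{\pi}_{i+1})^{-1}\E^{\pi}_{i}[\tilde Y^{\pi}_{t_{i+1}}\bigtriangleup^{\pi}W_{i+1}]$ via Cauchy--Schwarz and $\E^{\pi}_{i}|\bigtriangleup^{\pi}W_{i+1}|^2=d\,\bigtriangleup^{\pi}_{i+1}$.

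To close the induction on \eqref{bound1} I treat the implicit relation $Y^{\pi}_{t_i}=\E^{\pi}_{i}[\tilde Y^{\pi}_{t_{i+1}}]+f(t_i,X^{\pi}_{t_i},Y^{\pi}_{t_i},Z^{\pi}_{t_i})\bigtriangleup^{\pi}_{i+1}$ coming from \eqref{a5}--\eqref{a5'}. For $|\pi|$ small the Lipschitz bound on $f$ in $y$ makes the map $y\mapsto \E^{\pi}_{i}[\tilde Y^{\pi}_{t_{i+1}}]+f(t_i,X^{\pi}_{t_i},y,Z^{\pi}_{t_i})\bigtriangleup^{\pi}_{i+1}$ a contraction, so $Y^{\pi}_{t_i}$ is well defined; using the linear growth of $f$ and absorbing the $|Y^{\pi}_{t_i}|$ term into the left side yields $(1-C|\pi|)|Y^{\pi}_{t_i}|\le \E^{\pi}_{i}|\tilde Y^{\pi}_{t_{i+1}}|+C|\pi|\big(1+|X^{\pi}_{t_i}|+|Z^{\pi}_{t_i}|\big)$. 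Substituting the two estimates just obtained, re-expressing the $|Z^{\pi}_{t_i}|$-contribution through the third displayed bound, and using $|X^{\pi}_{t_i}|\le 1+|X^{\pi}_{t_i}|^2$ to reduce every term to a constant or a multiple of $|X^{\pi}_{t_i}|^2$, produces exactly the coupled recursion defining $\alpha^{\pi}_i,\beta^{\pi}_i$ (the prefactor $(1-C|\pi|)^{-1}$ coming from the contraction and $(1+C^2|\pi|)^{1/2}$ from the $Z^{\pi}_{t_i}$-term); this proves \eqref{bound1} at level $i$.

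The main obstacle is the final almost-sure bound. Setting $\rho^{\pi}_i=\alpha^{\pi}_i+\beta^{\pi}_i$ and noting that the recursion couples $\alpha_i$ to $\beta_{i+1}$ and $\beta_i$ to $\alpha_{i+1}$, the sum $\rho$ decouples this cross term at leading order, giving
\[ \rho^{\pi}_i\le (1+C|\pi|)\big(1+C|\bigtriangleup^{\pi}B_{i+1}|\big)\rho^{\pi}_{i+1}+C|\pi|\big(1+|\bigtriangleup^{\pi}B_{i+1}|\big), \]
so that $\max_i\rho^{\pi}_i$ is dominated by $\exp\!\big(C\sum_i(|\pi|+|\bigtriangleup^{\pi}B_{i+1}|)\big)$ times a bounded inhomogeneous sum. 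The deterministic part contributes $e^{CT}$, but the Brownian part is delicate: $\sum_i|\bigtriangleup^{\pi}B_{i+1}|$ diverges as $|\pi|\to0$, so each first-order increment must be paired with a $\sqrt{|\pi|}$ factor produced by the Cauchy--Schwarz steps and converted by Young's inequality,
\[ \sqrt{|\pi|}\,|\bigtriangleup^{\pi}B_{i+1}|\le \tfrac12\big(|\pi|+|\bigtriangleup^{\pi}B_{i+1}|^2\big), \]
after which the exponent is controlled by $C\big(T+\sum_i|\bigtriangleup^{\pi}B_{i+1}|^2\big)$. The claim then follows from the almost-sure convergence of the realized quadratic variation $\sum_i|\bigtriangleup^{\pi}B_{i+1}|^2\to\langle B\rangle_T=\ell T$ as $|\pi|\to0$, which keeps the exponent, and hence $\limsup_{|\pi|\to0}\max_{0\le i\le n}\rho^{\pi}_i$, finite almost surely. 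Verifying that \emph{every} surviving first-order increment genuinely carries such a compensating $\sqrt{|\pi|}$, so that no uncompensated $\sum_i|\bigtriangleup^{\pi}B_{i+1}|$ remains in the exponent, is the crux on which the whole almost-sure estimate depends.
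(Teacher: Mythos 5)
Your base case and your derivation of the two conditional-expectation estimates follow the same route as the paper (one-step Euler moment bound for $\E^{\pi}_i|X^{\pi}_{t_{i+1}}|^2$, conditional Cauchy--Schwarz for the $g\,\bigtriangleup^{\pi}B_{i+1}$ term). The genuine gap is in how you close the induction on the bound $|Y^{\pi}_{t_i}|\le\alpha^{\pi}_i+\beta^{\pi}_i|X^{\pi}_{t_i}|^2$, specifically in your treatment of the $Z^{\pi}_{t_i}$-contribution $K\bigtriangleup^{\pi}_{i+1}|Z^{\pi}_{t_i}|=K\,|\E^{\pi}_i[\tilde Y^{\pi}_{t_{i+1}}\bigtriangleup^{\pi}W_{i+1}]|$. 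You bound this \emph{additively} through your third displayed estimate, which only carries a $\sqrt{|\pi|}$ prefactor; the resulting per-step amplification of the coefficient of $|X^{\pi}_{t_i}|^2$ is then of order $1+C\sqrt{|\pi|}$, which neither reproduces the stated factor $(1+C^2|\pi|)^{1/2}$ in the recursion for $\alpha^{\pi}_i,\beta^{\pi}_i$ nor survives iteration, since $(1+C\sqrt{|\pi|})^{T/|\pi|}\rightarrow\infty$ as $|\pi|\rightarrow 0$. The paper instead uses the Bouchard--Touzi device: it writes $K|\E^{\pi}_i[\tilde Y^{\pi}_{t_{i+1}}\bigtriangleup^{\pi}W_{i+1}]|=\E^{\pi}_i[\tilde Y^{\pi}_{t_{i+1}}\,\zeta_i\bigtriangleup^{\pi}W_{i+1}]$ for an $\mathcal{F}_{t_i}$-measurable $\zeta_i$ with $|\zeta_i|\le K$, merges this with $\E^{\pi}_i[\tilde Y^{\pi}_{t_{i+1}}]$ into $\E^{\pi}_i[\tilde Y^{\pi}_{t_{i+1}}(1+\zeta_i\bigtriangleup^{\pi}W_{i+1})]$, and applies Cauchy--Schwarz; because $\E^{\pi}_i[\zeta_i\bigtriangleup^{\pi}W_{i+1}]=0$ the cross term vanishes and one gets exactly $(\E^{\pi}_i|1+\zeta_i\bigtriangleup^{\pi}W_{i+1}|^2)^{1/2}\le(1+C^2|\pi|)^{1/2}$, i.e.\ a per-step factor $1+O(|\pi|)$. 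You name this factor as ``coming from the $Z^{\pi}_{t_i}$-term'', but the mechanism you describe cannot produce it, and without it the inductive claim with the stated $\alpha^{\pi}_i,\beta^{\pi}_i$ does not close.

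On the final almost-sure statement you are candid that your argument is incomplete, and by your own criterion it fails: in the recursion the terms $(1+C|\bigtriangleup^{\pi}B_{i+1}|)\beta^{\pi}_{i+1}$ and $(1+C|\bigtriangleup^{\pi}B_{i+1}|)\alpha^{\pi}_{i+1}$ carry \emph{no} compensating $\sqrt{|\pi|}$, so the exponent you are led to is $C\sum_i|\bigtriangleup^{\pi}B_{i+1}|$, which is of order $T/\sqrt{|\pi|}$ and is not controlled by the realized quadratic variation. You should know that the paper's proof simply stops after the induction for the pointwise bound and never addresses the $\limsup$ claim at all, so you are not overlooking a hidden argument in the source; but as written neither your sketch nor the paper derives $\limsup_{|\pi|\rightarrow 0}\max_{0\le i\le n}(\alpha^{\pi}_i+\beta^{\pi}_i)<\infty$ from the displayed recursion, and this part should be flagged as unproved rather than reduced to quadratic variation.
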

\begin{proof}
First, since $h$ is $C$-Lipschitz, $h(0)$ is bounded by $C$,
\begin{eqnarray*}
|Y_{t_n}|=|h(X^{\pi}_T)|\leq C(|X^{\pi}_T|+1)\leq 2C+C|X^{\pi}_T|^{2}=\alpha_n+\beta_n|X^{\pi}_T|^2.
\end{eqnarray*}
Next, we assume that
\begin{eqnarray}
|Y_{t_{i+1}}|\leq \alpha_{i+1}^{\pi}+\beta_{i+1}^{\pi}|X^{\pi}_{t_{i+1}}|^2,
\end{eqnarray}
for some fixed $0\leq i\leq n-1$. Then by the definition of $Y^{\pi}$ in \eqref{a5'}, there exists a $\mathcal{F}_{t_i}$-measurable random
variable $\zeta_i$ such that
\begin{eqnarray}
(1-C|\pi|)|Y^{\pi}_{t_i}|&\leq &\E_{i}^{\pi}[(Y^{\pi}_{t_{i+1}}+g(t_{i+1},X^{\pi}_{t_{i+1}},Y^{\pi}_{t_{i+1}})\bigtriangleup^{\pi}B_{i+1})(1
+\zeta_i\bigtriangleup^{\pi}W_{i+1}]+C|\pi|(2+|X^{\pi}_{t_{i}}|)\nonumber\\\nonumber\\
&\leq &(\E_{i}^{\pi}|Y^{\pi}_{t_{i+1}}+g(t_{i+1},X^{\pi}_{t_{i+1}},Y^{\pi}_{t_{i+1}})\bigtriangleup^{\pi}B_{i+1}|^{2})^{1/2}(\E_i^{\pi}|
+\zeta_{i}\bigtriangleup^{\pi}W_{i+1}|^{2})^{1/2}\nonumber\\\nonumber\\
&&+C|\pi|(3+|X^{\pi}_{t_{i}}|^{2}).\label{bound4}
\end{eqnarray}
It show in \cite{Tal} that
\begin{eqnarray*}
\E_i^{\pi}|1+\zeta_{i}\bigtriangleup^{\pi}W_{i+1}|^{2}\leq 1+C^2|\pi|.
\end{eqnarray*}
This provide from \eqref{bound4}
\begin{eqnarray}
(1-C|\pi|)|Y^{\pi}_{t_i}|&\leq & (1+C^2|\pi|)^{1/2}(\E_{i}^{\pi}|Y^{\pi}_{t_{i+1}}+g(t_{i+1},X^{\pi}_{t_{i+1}},
Y^{\pi}_{t_{i+1}})\bigtriangleup^{\pi}B_{i+1}|^{2})^{1/2}\nonumber\\\nonumber\\
&&+C|\pi|(3+|X^{\pi}_{t_{i}}|^{2}).\label{bound5}
\end{eqnarray}
But it follows from the Lipschitz property of $g$ that,
\begin{eqnarray*}
&&\E_i^{\pi}(|Y^{\pi}_{t_{i+1}}+g(t_{i+1},X^{\pi}_{t_{i+1}},Y^{\pi}_{t_{i+1}})\bigtriangleup^{\pi}B_{i+1}|^{2})^{1/2}\\
&\leq& (1+C|\bigtriangleup^{\pi}B_{i+1}|)(\E_i^{\pi}|Y^{\pi}_{t_{i+1}}|^2)^{1/2}
+C|\bigtriangleup^{\pi}B_{i+1}|(\E_i^{\pi}|X^{\pi}_{t_{i+1}}|^2)^{1/2}\\
&\leq&(1+C|\bigtriangleup^{\pi}B_{i+1}|)\left\{\alpha^{\pi}_{i+1}+\beta_{i+1}^{\pi}[(1+2C|\pi|)|X^{\pi}_{t_{i}}|^{2}+6K^2|\pi|]\right\}\\
&&+C|\bigtriangleup^{\pi}B_{i+1}|[(1+2C|\pi|)|X^{\pi}_{t_{i}}|^{2}+6K^2|\pi|]\\
&=&(1+2C|\pi|)\left\{(1+C|\bigtriangleup^{\pi}B_{i+1}|)\beta_{i+1}^{\pi}+C|\bigtriangleup^{\pi}B_{i+1}|\right\}|X^{\pi}_{t_{i}}|^{2}\\
&&+(1+C|\bigtriangleup^{\pi}B_{i+1}|)\alpha^{\pi}_{i+1}+6C^2|\pi|(1+2C|\bigtriangleup^{\pi}B_{i+1}|).
\end{eqnarray*}
Finally \eqref{bound5} becomes
\begin{eqnarray*}
|Y^{\pi}_{t_i}|&\leq &(1-C|\pi|)^{-1}(1+C^2|\pi|)^{1/2}\\
&&\times\left\{(1+2C|\pi|)[(1+C|\bigtriangleup^{\pi}B_{i+1}|)
\beta_{i+1}^{\pi}+C|\bigtriangleup^{\pi}B_{i+1}|]+C|\pi|\right\}|X^{\pi}_{t_{i}}|^{2}\nonumber\\
&&+(1-C|\pi|)^{-1}(1+C^2|\pi|)^{1/2}\\
&&\times\left\{(1+C|\bigtriangleup^{\pi}B_{i+1}|)\alpha^{\pi}_{i+1}+6C^2|\pi|(1+2C|\bigtriangleup^{\pi}B_{i+1}|)+3C|\pi|\right\}\\
&=&\alpha^{\pi}_i+\beta^{\pi}_i|X^{\pi}_{t_{i}}|^{2}.
\end{eqnarray*}
\end{proof}
\section {Rate of convergence of the regression approximation}
In this section, we try to give some ideas for method of simulating numerical scheme derived in the above section. It well know that the process $X^{\pi}$ defined
by \eqref{SDEnum} is simulated by the classical Monte-Carlo method. We are reduced to simulate the process $(Y^{\pi},Z^{\pi})$ defined in \eqref{a5} and \eqref{a5'}.
In practice, the main tool to define of an approximation of $Y^{\pi}$, and then of $Z^{\pi}$, is to replace the conditional expectation $\E^{\pi}_i$
by its estimator $\widehat{E}^{\pi}_i$ in the backward scheme \eqref{a5} and \eqref{a5'}. We first establish the following bound on the $Y^{\pi}_{t_i}$'s which
help us to derive this simulation.

For the regression approximation, we consider
$\{\mathcal{P}^{\pi}_i\}_{0\leq i\leq n}, \,\{\mathcal{R}^{\pi}_{\ i}\}_{0\leq i\leq n}$ and $\{\mathcal{J}^{\pi}_i\}_{0\leq i\leq n}$ defined by:
\begin{eqnarray*}
{\mathcal{P}}^{\pi}_i&=&\alpha_i^{\pi}+\beta_i^{\pi}|X^{\pi}_{t_{i}}|^2\\
\mathcal{R}^{\pi}_i&=&(1+2C|\pi|)\left\{(1+C|\bigtriangleup^{\pi}B_{i+1}|)\beta_{i+1}^{\pi}+C|\bigtriangleup^{\pi}B_{i+1}|\right\}|X^{\pi}_{t_{i}}|^{2}\\
&&+(1+C|\bigtriangleup^{\pi}B_{i+1}|)\alpha^{\pi}_{i+1}+6C^2|\pi|(1+2C|\bigtriangleup^{\pi}B_{i+1}|)\\
\mathcal{J}^{\pi}_i&=& \sqrt{|\pi|}(1+2C|\pi|)\left\{(1+C|\bigtriangleup^{\pi}B_{i+1}|)\beta_{i+1}^{\pi}+C|\bigtriangleup^{\pi}B_{i+1}|\right\}|X^{\pi}_{t_{i}}|^{2}\nonumber\\
&&+\sqrt{|\pi|}(1+C|\bigtriangleup^{\pi}B_{i+1}|)\alpha^{\pi}_{i+1}+6C^2|\pi|(1+2C|\bigtriangleup^{\pi}B_{i+1}|).
\end{eqnarray*}
Therefore thanks to Lemma 4.7, we have
\begin{eqnarray}
-\mathcal{P}^{\pi}_i(X^{\pi}_{t_{i}},\bigtriangleup^{\pi}B_{i+1})\leq Y^{\pi}_{t_i}\leq\mathcal{P}^{\pi}_i(X^{\pi}_{t_{i}}
,\bigtriangleup^{\pi}B_{i+1})\label{regr1}\\\nonumber\\
-\mathcal{R}^{\pi}_i(X^{\pi}_{t_{i}},\bigtriangleup^{\pi}B_{i+1})\leq \E_{i}^{\pi}[\tilde{Y}^{\pi}_{t_{i+1}}]\leq\mathcal{R}^{\pi}_{\ i}(X^{\pi}_{t_{i}}
,\bigtriangleup^{\pi}B_{i+1})\label{regr2}\\\nonumber\\
-\mathcal{J}^{\pi}_i(X^{\pi}_{t_{i}},\bigtriangleup^{\pi}B_{i+1})\leq \E[\tilde{Y}^{\pi}_{t_{i+1}}\bigtriangleup^{\pi}W_{i+1}]\leq\mathcal{J}^{\pi}_i(X^{\pi}_{t_{i}}
,\bigtriangleup^{\pi}B_{i+1}).\label{regr3}
\end{eqnarray}
Next, for a $\R$-valued random variable $\xi$, we define
\begin{eqnarray*}
{\bf T}_i^{\mathcal{P}^{\pi}}(\xi)&=&-\mathcal{P}^{\pi}_i(X^{\pi}_{t_i},\bigtriangleup^{\pi}B_{i+1})\vee
\xi\wedge\mathcal{P}^{\pi}_i(X^{\pi}_{t_i},\bigtriangleup^{\pi}B_{i+1})\\\\
{\bf T}_i^{\mathcal{R}^{\pi}}(\xi)&=&-\mathcal{R}^{\pi}_i(X^{\pi}_{t_i},\bigtriangleup^{\pi}B_{i+1})\vee
\xi\wedge\mathcal{R}^{\pi}_{\ i}(X^{\pi}_{t_i},\bigtriangleup^{\pi}B_{i+1})\\\\
{\bf T}_i^{\mathcal{J}^{\pi}}(\xi)&=&-\mathcal{J}^{\pi}_i(X^{\pi}_{t_i},\bigtriangleup^{\pi}B_{i+1})\vee
\xi\wedge\mathcal{J}^{\pi}_i(X^{\pi}_{t_i},\bigtriangleup^{\pi}B_{i+1}).
\end{eqnarray*}
Given an approximation $\widehat{\E}^{\pi}_i$ of $\E^{\pi}_i$, we are ready to get the process $(\hat{Y}^{\pi}, \hat{Z}^{\pi})$ defined by following backward induction
scheme:
\begin{eqnarray}
\hat{Y}^{\pi}_{t_n}&=&g(X^{\pi}_T),\label{appr1}\\\nonumber\\
\hat{Z}^{\pi}_{t_{i-1}}&=&\frac{1}{\bigtriangleup_{i}^{\pi}}\widehat{\E}^{\pi}_{i-1}[\left(\hat{Y}_{t_{i}}^{\pi}
+g(t_i,X_{t_i}^{\pi},\hat{Y}_{t_{i}}^{\pi})\bigtriangleup^{\pi}B_i\right)\bigtriangleup^{\pi}W_{i}]\label{appr2}\\\nonumber\\
{Y}^{\pi}_{t_{i-1}}&=&\widehat{\E}_{i-1}^{\pi}[\hat{Y}_{t_{i}}^{\pi}+g(t_{i},X^{\pi}_{t_{i}},\hat{Y}^{\pi}_{t_{i}})\bigtriangleup^{\pi}B_{i}]
+f(t_{i-1},X^{\pi}_{t_{i-1}},{Y}^{\pi}_{t_{i-1}},\hat{Z}^{\pi}_{t_{i-1}})\bigtriangleup_{i}^{\pi}\label{appr3}\\\nonumber\\
\hat{Y}^{\pi}_{t_{i-1}}&=&{\bf T}_{t_{i-1}}^{\mathcal{P}^{\pi}}({Y}^{\pi}_{t_{i-1}})\label{appr4},
\end{eqnarray}
for all $1\leq i\leq n$.
\begin{remark}
Using the above notation and replace $X^{\pi}$ by $U^{\pi}=(X^{\pi},B^{\pi})$, on can state analogous to examples 4.1 and 4.2 appearing in \cite{Tal}.
\end{remark}
To end this section, we derive the following $L^p$ estimate of the error $\hat{Y}^{\pi}-Y^{\pi}$ in terms of the regression errors $\widehat{E}^{\pi}_i-\E^{\pi}_i$.
\begin{theorem}
Let $p\geq 1$ be given, and $\mathcal{P}^{\pi}$ be a sequence defined above.
Then, there is a constant $C$ depending only on $T, K$ and $p$ such that
\begin{eqnarray*}
\|\hat{Y}^{\pi}_{t_i}-Y^{\pi}_{t_i}\|_{L^{p}}&\leq &\frac{C}{\pi}\max_{1\leq j\leq n-1}\left\{\|(\widehat{\E}_j-\E_j)[\hat{Y}^{\pi}_{t_{j+1}}
+g(t_{j+1},X_{t_{j+1}}^{\pi},\hat{Y}_{t_{j+1}}^{\pi})\bigtriangleup^{\pi}B_{j+1}]\|_{L^{p}}\right.\\
&&\left.+\|(\widehat{\E}_j-\E_j)[(\hat{Y}^{\pi}_{t_{j+1}}+g(t_{j+1},X_{t_{j+1}}^{\pi},\hat{Y}_{t_{j+1}}^{\pi})\bigtriangleup^{\pi}B_{j+1}) \bigtriangleup^{\pi}W_{j+1}]\|_{L^{p}}\right\}
\end{eqnarray*}
\end{theorem}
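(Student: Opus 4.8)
The plan is to follow the error-propagation argument of Bouchard and Touzi \cite{Tal}, adapted to the backward integral. Write $\delta Y_i=\hat{Y}^{\pi}_{t_i}-Y^{\pi}_{t_i}$, let $\bar Y^{\pi}_{t_{i-1}}$ be the pre-truncation quantity on the left of \eqref{appr3} (so that $\hat Y^{\pi}_{t_{i-1}}={\bf T}^{\mathcal{P}^{\pi}}_{t_{i-1}}(\bar Y^{\pi}_{t_{i-1}})$ by \eqref{appr4}), and abbreviate $\hat{\tilde Y}_i=\hat{Y}^{\pi}_{t_i}+g(t_i,X^{\pi}_{t_i},\hat{Y}^{\pi}_{t_i})\bigtriangleup^{\pi}B_i$ and $\tilde Y_i=Y^{\pi}_{t_i}+g(t_i,X^{\pi}_{t_i},Y^{\pi}_{t_i})\bigtriangleup^{\pi}B_i$. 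First I would subtract the exact scheme \eqref{a5}--\eqref{a5'} from the approximate scheme \eqref{appr2}--\eqref{appr3} at each step, splitting every conditional expectation into a pure regression error plus a genuine propagation term:
\[
\widehat{\E}^{\pi}_{i-1}[\hat{\tilde Y}_i]-\E^{\pi}_{i-1}[\tilde Y_i]=(\widehat{\E}^{\pi}_{i-1}-\E^{\pi}_{i-1})[\hat{\tilde Y}_i]+\E^{\pi}_{i-1}[\hat{\tilde Y}_i-\tilde Y_i],
\]
and similarly for the $Z$-update with the extra factor $\bigtriangleup^{\pi}W_i$. The first summand is exactly one of the regression errors in the statement; the second is the propagation of $\delta Y_i$ down to step $i-1$.

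Next I would estimate the propagation. The Lipschitz bound on $g$ gives $|\hat{\tilde Y}_i-\tilde Y_i|\le(1+K|\bigtriangleup^{\pi}B_i|)\,|\delta Y_i|$, and conditional Jensen makes $\E^{\pi}_{i-1}$ an $L^p$-contraction. For the $Z$-update I first center the increment, replacing $\hat{\tilde Y}_i-\tilde Y_i$ by $\hat{\tilde Y}_i-\tilde Y_i-\E^{\pi}_{i-1}[\hat{\tilde Y}_i-\tilde Y_i]$ (legitimate since $\E^{\pi}_{i-1}[\bigtriangleup^{\pi}W_i]=0$), and then use conditional Cauchy--Schwarz together with $\E^{\pi}_{i-1}|\bigtriangleup^{\pi}W_i|^2=d\,\bigtriangleup^{\pi}_i$. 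Although $\delta Z_{i-1}$ carries the prefactor $1/\bigtriangleup^{\pi}_i$ from \eqref{appr2}, it re-enters the $Y$-recursion only through $[f(\cdots,\hat Z^{\pi}_{t_{i-1}})-f(\cdots,Z^{\pi}_{t_{i-1}})]\bigtriangleup^{\pi}_i$, so the Lipschitz bound on $f$ contributes $K\bigtriangleup^{\pi}_i|\delta Z_{i-1}|$ and the $1/\bigtriangleup^{\pi}_i$ is compensated; only the regression error $(\widehat{\E}^{\pi}_{i-1}-\E^{\pi}_{i-1})[\hat{\tilde Y}_i\bigtriangleup^{\pi}W_i]$ survives per step. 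The implicit occurrence of $\bar Y^{\pi}_{t_{i-1}}$ inside $f$ is absorbed to the left using $(1-K|\pi|)^{-1}=1+C|\pi|$ for small $|\pi|$. Finally the truncation is non-expansive about $Y^{\pi}_{t_{i-1}}$: since \eqref{regr1} (Lemma 4.7) gives $|Y^{\pi}_{t_{i-1}}|\le\mathcal{P}^{\pi}_{i-1}$, the projection onto $[-\mathcal{P}^{\pi}_{i-1},\mathcal{P}^{\pi}_{i-1}]$ fixes $Y^{\pi}_{t_{i-1}}$ and cannot increase distances, whence $\|\delta Y_{i-1}\|_{L^p}\le\|\bar Y^{\pi}_{t_{i-1}}-Y^{\pi}_{t_{i-1}}\|_{L^p}$.

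Collecting these bounds yields a one-step recursion of the form
\[
\|\delta Y_{i-1}\|_{L^p}\le(1+C|\pi|)\,\|\delta Y_i\|_{L^p}+C\big(\|R^{Y}_{i-1}\|_{L^p}+\|R^{Z}_{i-1}\|_{L^p}\big),
\]
where $R^{Y}_{i-1}=(\widehat{\E}^{\pi}_{i-1}-\E^{\pi}_{i-1})[\hat{\tilde Y}_i]$ and $R^{Z}_{i-1}=(\widehat{\E}^{\pi}_{i-1}-\E^{\pi}_{i-1})[\hat{\tilde Y}_i\bigtriangleup^{\pi}W_i]$ are exactly the two terms in the statement. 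I would iterate this backward from $i=n$, using the matching terminal data $\delta Y_n=0$, bound $(1+C|\pi|)^{n}\le e^{CT}$ since $n=T/|\pi|$, and dominate each of the $n$ regression contributions by the maximum over $j$; the resulting factor $n\le T/|\pi|$ produces the announced $C/|\pi|$ in front of $\max_{1\le j\le n-1}\{\cdots\}$.

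The step I expect to be the main obstacle is the clean control of the $Z$-error propagation: the division by $\bigtriangleup^{\pi}_i$ in \eqref{appr2} amplifies errors, and one must verify that the compensating $\bigtriangleup^{\pi}_i$ from the $f$-term, together with the centering and conditional Cauchy--Schwarz, keeps every constant independent of $n$. The backward increments $\bigtriangleup^{\pi}B_i$, absent from the classical BSDE setting of \cite{Tal}, introduce the factors $(1+K|\bigtriangleup^{\pi}B_i|)$ whose $L^p$-moments must be controlled uniformly in $i$; carrying these through the recursion while passing from the conditional $L^2$-estimate produced by Cauchy--Schwarz to the desired $L^p$-norm, and keeping the final constant dependent only on $T$, $K$ and $p$, is the delicate bookkeeping at the heart of the argument.
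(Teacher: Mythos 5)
Your proposal follows essentially the same route as the paper's proof, which likewise derives the one-step inequality $(1-C|\pi|)\|Y^{\pi}_{t_i}-\hat{Y}^{\pi}_{t_i}\|_{L^{p}}\leq \eta_i+(1+C|\pi|)^{1/2k}\|Y^{\pi}_{t_{i+1}}-\hat{Y}^{\pi}_{t_{i+1}}\|_{L^{p}}$ (splitting off the per-step regression error $\varepsilon_i$, absorbing the implicit $f$-term, using the truncation's non-expansiveness via Lemma 4.7, and the H\"older/Cauchy--Schwarz control of the $\zeta_i\bigtriangleup^{\pi}W_{i+1}$ factor) and then iterates backward as in Bouchard--Touzi to produce the $C/|\pi|$ prefactor. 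The only real difference is that the paper states this recursion and defers the bookkeeping to \cite{Tal}, whereas you spell it out and explicitly flag the control of the $(1+C|\bigtriangleup^{\pi}B_{i+1}|)$ factors --- a point the paper itself glosses over.
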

\begin{proof}
For $0\leq i\leq n-1$ be fixed, with a similarly calculus as one use in proof of Theorem 4.1 \cite{Tal}, we have
\begin{eqnarray}
(1-C\pi)|Y^{\pi}_{t_i}-\hat{Y}^{\pi}_{t_i}|\leq |\varepsilon_i|+(1+C|\bigtriangleup^{\pi}B_{i+1}|)(\E|Y^{\pi}_{t_i}-\hat{Y}^{\pi}_{t_i}|^{p})^{1/p}(\E_i^{\pi}
|1+\zeta_i\bigtriangleup^{\pi}W_{i+1}|^{2k})^{1/2k},\nonumber\\\label{appr5}
\end{eqnarray}
where $k$ is an arbitrary integer greater than the conjugate of $p$ and
\begin{eqnarray*}
\varepsilon_i&=&(\widehat{\E}_i-\E_i)[\hat{Y}^{\pi}_{t_{i+1}}+g(t_{i+1},X_{t_{i+1}}^{\pi},\hat{Y}_{t_{i+1}}^{\pi})\bigtriangleup^{\pi}B_{i+1}]\\
&& +\bigtriangleup^{\pi}_{i+1}\left\{f(t_{i},X^{\pi}_{t_i},{Y}^{\pi}_{t_i},(\bigtriangleup^{\pi}_{i+1})^{-1}\E^{\pi}_i[(\hat{Y}^{\pi}_{t_{i+1}}
+g(t_{i+1},X_{t_{i+1}}^{\pi},\hat{Y}_{t_{i+1}}^{\pi})\bigtriangleup^{\pi}B_{i+1})\bigtriangleup^{\pi}W_{i+1}])\right.\\
&&\left.-f(t_{i},X^{\pi}_{t_i},{Y}^{\pi}_{t_i},
(\bigtriangleup^{\pi}_{i+1})^{-1}\widehat{\E}^{\pi}_i[(\hat{Y}^{\pi}_{t_{i+1}}
+g(t_{i+1},X_{t_{i+1}}^{\pi},\hat{Y}_{t_{i+1}}^{\pi})\bigtriangleup^{\pi}B_{i+1})\bigtriangleup^{\pi}W_{i+1}])\right\}.
\end{eqnarray*}
Since
\begin{eqnarray*}
\|\varepsilon_i\|_{L^p}\leq \eta_i=C(\|(\widehat{\E}_i-\E_i)[\hat{Y}^{\pi}_{t_{i+1}}+g(t_{i+1},X_{t_{i+1}}^{\pi},\hat{Y}_{t_{i+1}}^{\pi})\bigtriangleup^{\pi}B_{i+1}]\|_{L^p}\\
+\|(\widehat{\E}_i-\E_i)[(\hat{Y}^{\pi}_{t_{i+1}}+g(t_{i+1},X_{t_{i+1}}^{\pi},\hat{Y}_{t_{i+1}}^{\pi})\bigtriangleup^{\pi}B_{i+1})\bigtriangleup^{\pi}W_{i+1}]\|_{L^p},
\end{eqnarray*}
and
\begin{eqnarray*}
(\E_i^{\pi}|1+\zeta_i\bigtriangleup^{\pi}W_{i+1}|^{2k})^{1/2k}\leq (1+C|\pi|),
\end{eqnarray*}
we get from \eqref{appr5}
\begin{eqnarray}
(1-C\pi)\|Y^{\pi}_{t_i}-\hat{Y}^{\pi}_{t_i}\|_{L^p}\leq \eta_i+(1+C|\pi|)^{1/2k}\|Y^{\pi}_{t_{i+}}-\hat{Y}^{\pi}_{t_{i+1}}\|_{L^p}
\end{eqnarray}
and thus the result follows as in \cite{Tal}.
\end{proof}
\begin{remark}
With the foregoing results, it is possible with some not very difficult adjustment to obtain similar results as those obtained by Bouchard and Touzi (see Section 5 and 6 of \cite{Tal}).

\end{remark}

\subsection*{Acknowledgments} The author would like to thank I. Boufoussi and Y. Ouknine for their valuable comments and suggestions and express
his deep gratitude to  and UCAM Mathematics Department for their friendly hospitality during his stay in Cadi Ayyad University.\newline
We also thank a anonymous referee whose suggestions and comments have been very useful in improving the original manuscript.

\label{lastpage-01}
\end{document}